\documentclass[11pt,a4paper]{article}
\usepackage[T1]{fontenc}
\usepackage[utf8]{inputenc}
\usepackage{lmodern}
\usepackage[margin=28mm,headheight=14pt]{geometry}
\usepackage{amsmath,amssymb,amsthm,mathtools}
\usepackage{microtype}
\usepackage{enumitem}
\usepackage{xcolor}
\usepackage{fancyhdr}
\usepackage{hyperref}
\hypersetup{colorlinks=true,linkcolor=blue!45!black,citecolor=blue!45!black,
  urlcolor=blue!45!black,pdftitle={Operator-norm Sudakov minoration for Gaussian chaos of order two},
  pdfsubject={A proof of the endpoint in Talagrand's Research Problem 15.1.16}}
\setlist[enumerate]{itemsep=3pt,topsep=5pt}
\setlist[itemize]{itemsep=3pt,topsep=5pt}
\allowdisplaybreaks[2]
\numberwithin{equation}{section}
\newtheorem{theorem}{Theorem}[section]
\newtheorem{lemma}[theorem]{Lemma}
\newtheorem{proposition}[theorem]{Proposition}
\newtheorem{corollary}[theorem]{Corollary}
\theoremstyle{definition}

\theoremstyle{remark}
\newtheorem{remark}[theorem]{Remark}
\newcommand{\E}{\mathbb E}
\newcommand{\Prob}{\mathbb P}
\newcommand{\R}{\mathbb R}
\newcommand{\Id}{\mathrm{Id}}
\newcommand{\HS}{\mathrm{HS}}
\newcommand{\op}{\mathrm{op}}
\newcommand{\W}{\mathcal W}
\newcommand{\cE}{\mathcal E}

\newcommand{\cH}{\mathcal H}
\newcommand{\conv}{\operatorname{conv}}
\newcommand{\tr}{\operatorname{tr}}
\newcommand{\rank}{\operatorname{rank}}
\newcommand{\dist}{\operatorname{dist}}
\newcommand{\sym}{\operatorname{sym}}
\newcommand{\Pack}{P}
\newcommand{\cPack}{\widehat P}
\newcommand{\KL}{D}
\newcommand{\MI}{\mathsf I}
\newcommand{\Ent}{\mathsf H}
\newcommand{\law}{\mathcal L}
\newcommand{\ip}[2]{\langle #1,#2\rangle}
\newcommand{\norm}[1]{\lVert #1\rVert}

\title{\textbf{Operator-norm Sudakov minoration\\for Gaussian chaos of order two}\footnote{The authors were supported by National Science Centre, Poland grants 2022/47/B/ST1/02114 (W. Bednorz) and 2021/40/C/ST1/00330 (R. Meller). }.}
\author{Witold Bednorz\footnote{Institute of Mathematics, University of Warsaw, Banacha 2, 02-097 Warsaw, Poland.},  Rafa{\l} Martynek\footnote{Institute of Mathematics, University of Warsaw, Banacha 2, 02-097 Warsaw, Poland.} and Rafa{\l} Meller\footnote{Institute of Mathematics, University of Warsaw, Banacha 2, 02-097 Warsaw, Poland.}}
\date{Research manuscript\quad\textbullet\quad 20 September 2026}

\begin{document}
\maketitle
\begin{center}\textbf{AI (GPT-6 Astra) was used in this research.}\end{center}
\vspace{-1.5em}
\begin{abstract}
We prove that an operator-norm separated family of matrices satisfies
\[
\E\sup_{A\in T}g^TAh\ge c a\sqrt{\log|T|},
\]
where $g,h$ are independent
standard Gaussian vectors and $a$ is the separation. The main information
estimate concerns arbitrary separated coisometries: conditional entropy is
bounded by a source-dependent operator energy times $\log|T|$, up to an
additive quadratic term in the common row dimension. An adaptive Gaussian
experiment proves this estimate by charging actual information increments
to one weighted posterior-entropy potential. Convex separation and a
Gaussian covering estimate then yield a bounded-radius result. To reach
the general case, we first choose an operator scale preserving the Sudakov
ratio, apply the known Hilbert--Schmidt minoration, and recompute a common
Gaussian block compression at the retained entropy. This ordering preserves
the normalization needed by the coisometry argument.
\end{abstract}

\section{Statement, notation, and the structure of the proof}
\label{sec:statement}

All matrices are real. If $T\subset\R^{d\times n}$ is finite, let
\begin{equation}\label{eq:widths}
 S(T)=\E\sup_{A\in T}g^TAh,
 \qquad
 \W(T)=\E\sup_{A\in T}|g^TAh|,
\end{equation}
where $g\in\R^d$ and $h\in\R^n$ are independent standard Gaussian vectors.
The matrix inner product is $\ip AX=\tr(AX^T)$. We write
$\norm{A}_{\op}$, $\norm{A}_{\HS}$, and $\norm{A}_*$ for the operator,
Hilbert--Schmidt, and nuclear norms, respectively. All logarithms are
natural. Positive numerical constants may change from line to line.

\begin{theorem}\label{thm:main}
There is a numerical constant $c>0$ such that, for every finite family
$T=\{A_1,\ldots,A_m\}\subset\R^{d\times n}$ and every $a>0$,
\begin{equation}\label{eq:main-packing}
 \norm{A_i-A_j}_{\op}\ge a\quad(i\ne j)
 \quad\Longrightarrow\quad
 S(T)\ge c a\sqrt{\log m}.
\end{equation}
Consequently, with $N$ denoting the usual covering number,
\begin{equation}\label{eq:main-covering}
 \varepsilon\sqrt{\log N(T,\norm{\cdot}_{\op},\varepsilon)}
 \le C S(T)\qquad(\varepsilon>0).
\end{equation}
\end{theorem}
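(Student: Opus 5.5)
We follow the route sketched in the abstract and reduce \eqref{eq:main-packing} in stages. \eqref{eq:main-covering} then follows from the packing statement: a maximal $\varepsilon$-separated subset $T'\subset T$ has $|T'|\ge N(T,\norm{\cdot}_{\op},\varepsilon)$, and $S(T)\ge S(T')$. So everything rests on the packing bound.

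\textbf{Step 1 (reductions).} By homogeneity take $a=1$. Replacing $T$ by $T-A_1$ changes $S$ only by $\E g^TA_1h=0$, so we may assume $0\in T$. We may also assume $\log m$ is large, since for bounded $m$ one separated pair already gives $S(T)\ge \E\max(0,g^T(A_1-A_2)h)/2\gtrsim \norm{A_1-A_2}_{\HS}\ge 1$.

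\textbf{Step 2 (information estimate for coisometries).} The core case is a family of matrices $A_i=U_i$ with orthonormal rows (coisometries $\R^n\to\R^k$), pairwise separated in operator norm. In this case we run an adaptive Gaussian experiment. A uniformly random index $I$ is observed through Gaussian measurements $g^TU_Ih$ along adaptively chosen directions. Each information increment is charged to a weighted posterior-entropy potential. The goal is a bound of the form
\[
\Ent(I\mid \text{observations})\le C\,\mathcal E\,\log m+Ck^2,
\]
where $\mathcal E$ is an operator energy controlled by $S(T)^2/\log m$. Combined with Fano's inequality, this shows that either $S(T)\gtrsim\sqrt{\log m}$, or $k^2\gtrsim\log m$. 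In the second alternative, Hilbert--Schmidt separation of order $\sqrt k$ is available. Convex separation (a Hahn--Banach dual certificate in the nuclear norm) and a Gaussian covering estimate then give the bounded-radius theorem: if $\norm{A_i}_{\op}\le R$, then $S(T)\ge c\sqrt{\log m}$ whenever $m\ge \exp(CR^2\cdot\ldots)$, after passing to a large subfamily.

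\textbf{Step 3 (general case).} We choose an operator scale $R$, namely the dyadic level at which a subfamily of $\ge m^{1/2}$ elements has operator norms of order $R$ while preserving the ratio $S/\sqrt{\log}$ up to constants; pigeonholing over dyadic shells does this. We then apply the known Hilbert--Schmidt Sudakov minoration for chaos (the Talagrand/Latała bound involving $\norm{\cdot}_{\HS}$ and $\norm{\cdot}_{\op}$ distances) to dispose of the regime where operator separation is dominated by the Hilbert--Schmidt contribution. Otherwise we compress by a common Gaussian block projection of dimension $\asymp\log m$, recomputed at the retained entropy, which maps the family to near-coisometries of row dimension $k$ with $k^2\lesssim\log m$ while keeping separation. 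Step 2 then applies.

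\textbf{Main obstacle.} The hardest point is Step 2: the entropy-increment charging argument must produce the additive $k^2$ term rather than a multiplicative loss. It must also preserve the normalization in the compression of Step 3, which is why the order of the steps matters: fix the scale first, then apply the HS minoration, then compress.
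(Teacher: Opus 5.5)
Your reduction of \eqref{eq:main-covering} to \eqref{eq:main-packing} and your Step~1 are fine. Steps~2--3, however, name the right ingredients without the mechanisms that connect them, and several concrete claims are false.

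The entropy inequality is not a Fano-type bound with $\cE\lesssim S(T)^2/\log m$. Nothing in your sketch produces such a bound. The actual inequality is $\Ent(I\mid J)\le C_\delta r^2+C_\delta\sum_i\mu_i\lambda_i\log(1/\mu_i)$, with $\Lambda_i=\frac12\E[(U_i-U_J)(U_i-U_J)^T\mid I=i]$, and it contains no Gaussian width at all. Its strength is that it holds for \emph{every} coupling $(I,J)$, and the missing idea is to exploit this by duality. The Perron variational formula for $K(W)_{ij}=\exp(-\beta\tr(W_iD_{ij}))$, combined with a minimax over trace-one positive semidefinite weights $W_i$, gives some $W$ with $\rho(K(W))\le e^{B_\delta r^2}$ at $\beta=A_\delta\log m$. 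An exponential-clock selection then extracts a convexly $b_\delta$-separated subsequence of length $\exp(\frac34\log m-C_\delta r^2)$, certified by $Z_i=W_iU_i$ with $\norm{Z_i}_*=1$. The width enters only afterwards, via $\log\cPack(K,a)\le C[1+(\W(K)/a)^2+r\W(K)/a]$.

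Likewise, coisometries do not come out of a Gaussian compression: the blocks $PAQ$, $PA$, $AQ$ have no reason to have orthonormal rows. They come from the nonlinear lift $[\,A\ \ (\Id-AA^T)^{1/2}\,]$ of a family with \emph{bounded radius-to-separation ratio}. This is followed by binning the completion blocks in the $r(r+1)/2$-dimensional space of symmetric matrices, at entropy cost $Cr^2$. Since the lift does not preserve bilinear width, only convex separation can be transported through it. Your alternative ``HS separation of order $\sqrt k$'' is also false: two $\delta$-separated coisometries may differ in a single row, in which case their Hilbert--Schmidt distance equals their operator distance.

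Step~3 then fails quantitatively. Sorting operator norms into dyadic shells leaves the separation at $a$ while the radius is $R$. The ratio $R/a$ is therefore unbounded, and a bounded-radius estimate whose constants depend on that ratio gives nothing; yours does depend on it, through $R^2$. What is needed is to raise the \emph{separation} scale. Choosing $j$ to maximize $4^j\log\Pack(T,2^ja)$ yields an $s$-separated $T_0$ of diameter at most $4s$, with $\log|T_0|\ge3q/4$ and $s\sqrt q\ge a\sqrt p$. In general this is far fewer than $m^{1/2}$ points; only the product $s\sqrt q$ is preserved.

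The Hilbert--Schmidt minoration $u(\log N(T,\norm{\cdot}_{\HS},u))^{1/4}\le CS(T)$, applied at this new entropy $q$, localizes to HS radius $\lesssim S(T)/q^{1/4}$. Hence the compression has row dimension $r\lesssim1+x^2/y$, where $x=S(T)/s$ and $y=\sqrt q$. The bounded-row estimate $\sqrt{\log|E|}\le C_L(1+r+\W(E)/a)$ then reads $y\le C(1+x+x^2/y)$, which closes to $y\lesssim x$ because $x\ge1/\pi$.

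Your target $k^2\lesssim\log m$, i.e.\ $k\le C'\sqrt{\log m}$, only gives $y\le C(1+C'y+x)$. This does not close unless $CC'<1$, which nothing in your argument provides. It is also inconsistent with your own compression ``of dimension $\asymp\log m$''.
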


The covering estimate is the endpoint asked for in Research Problem
15.1.16 of \cite{TalagrandBook}. The existing Hilbert--Schmidt estimate
from Proposition 15.1.15 of that book, originating in
\cite{TalagrandChaos}, is an input to the proof; we state its exact form
in Section~\ref{sec:localization}.

For a subset $E$ of a normed space, $\Pack(E,a)$ is the largest cardinality
of a subset with pairwise distances at least $a$. The convex separation
number $\cPack(E,a)$ is the largest length of a sequence $x_1,\ldots,x_N$
in $E$ such that
\begin{equation}\label{eq:convex-packing-definition}
 \dist(x_j,\conv\{x_i:i<j\})\ge a\qquad(j\ge2).
\end{equation}
Convex separation is the notion introduced in \cite{AMST}. Unless a
different norm is specified, both packing notions below use the operator
norm.

The proof has four parts. Section~\ref{sec:information} proves the
coisometry entropy inequality. Section~\ref{sec:pruning} converts it into
convex separation at a fixed radius-to-separation ratio.
Section~\ref{sec:covering} bounds convex separation by bilinear width.
Sections~\ref{sec:compression}--\ref{sec:localization} produce a bounded
family in one common row space and complete the argument.

We will repeatedly use three elementary facts. Translation by a fixed
matrix preserves $S$. If $0\in T$, symmetry of the process gives
\begin{equation}\label{eq:absolute-comparison}
 S(T)\le\W(T)\le2S(T).
\end{equation}
Also, orthogonal compression on either side contracts $\W$, by conditioning
on the Gaussian coordinates that are removed. Finally, for every matrix $A$,
\begin{equation}\label{eq:one-matrix-lower}
 \E|g^TAh|=\sqrt{\frac2\pi}\,\E\norm{Ah}_2
 \ge\frac2\pi\norm{A}_{\op}.
\end{equation}
It follows that
\begin{equation}\label{eq:two-point}
 S(\{A,B\})=\frac12\E|g^T(A-B)h|
 \ge\frac1\pi\norm{A-B}_{\op}.
\end{equation}

\section{A weighted entropy inequality for coisometries}
\label{sec:information}

Fix $0<\delta\le2$ and a family $U_1,\ldots,U_m\in\R^{r\times n}$,
where $r\ge1$, satisfying
\begin{equation}\label{eq:coisometries}
 U_iU_i^T=\Id_r,
 \qquad \norm{U_i-U_j}_{\op}\ge\delta\quad(i\ne j).
\end{equation}
Let $(I,J)$ be any pair of indices taking values in $\{1,\ldots,m\}$,
and write $\mu_i=\Prob(I=i)$. Define, for $\mu_i>0$,
\begin{equation}\label{eq:lambda}
 \Lambda_i=\frac12\E\big[(U_i-U_J)(U_i-U_J)^T\mid I=i\big],
 \qquad \lambda_i=\norm{\Lambda_i}_{\op}.
\end{equation}
Terms with $\mu_i=0$ are omitted. Notice that $0\le\lambda_i\le2$.
Entropy and mutual information, denoted by $\Ent$ and $\MI$, use natural
logarithms; information involving continuous observations is defined by
relative entropy, not differential entropy.

\begin{theorem}\label{thm:entropy}
Under \eqref{eq:coisometries},
\begin{equation}\label{eq:weighted-entropy-theorem}
 \Ent(I\mid J)
 \le C_\delta r^2+
 C_\delta\sum_i\mu_i\lambda_i\log\frac1{\mu_i}.
\end{equation}
If $I$ and $J$ have the same marginal distribution, $p=\log m$, and
$\cE=\sum_i\mu_i\lambda_i$, then
\begin{equation}\label{eq:stationary-entropy}
 \Ent(J\mid I)=\Ent(I\mid J)
 \le C_\delta r^2+C_\delta p\cE.
\end{equation}
The constants depend only on $\delta$, and not on $n$, $m$, or the
distribution of $(I,J)$.
\end{theorem}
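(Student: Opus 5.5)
The plan is to obtain \eqref{eq:weighted-entropy-theorem} by an information-theoretic decoding argument, and then derive \eqref{eq:stationary-entropy} from it by elementary manipulations.

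\textbf{Decoding and the chain rule.} Given $J$, we run a Gaussian experiment whose observations are informative about $I$. The natural observation uses the coisometry structure: draw $\xi\in\R^r$ standard Gaussian, independent of everything, and observe $Y=U_I^T\xi+\sigma\zeta$, with $\zeta$ standard Gaussian in $\R^n$. Since $U_iU_i^T=\Id_r$, the law of $U_i^T\xi$ does not depend on $i$ when viewed marginally. Conditioned on $\xi$ and $J$, however, the relative entropy between the hypotheses $I=i$ and $I=J$ is
\[
\frac{|(U_i-U_J)^T\xi|^2}{2\sigma^2}.
\]
Its conditional expectation given $I=i$ is $\sigma^{-2}\tr\Lambda_i\le r\sigma^{-2}\lambda_i$.

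The chain rule gives
\[
\Ent(I\mid J)\le \Ent(I\mid J,Y_{1:K})+\MI(I;Y_{1:K}\mid J),
\]
where $Y_{1:K}$ are independent repetitions of the observation. Two things must then be controlled.

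\begin{itemize}
\item \emph{Residual entropy.} The separation $\norm{U_i-U_j}_\op\ge\delta$ must make $\Ent(I\mid J,Y_{1:K})$ small once $K$ is large. A maximum-likelihood test between $i$ and $j$ errs with probability roughly $\exp(-cK\delta^2/\sigma^2)$ in directions where $U_i-U_j$ has a singular value of order $\delta$. Controlling the full posterior through an $r$-dimensional net of directions $\xi$ should cost an additive $C_\delta r^2$. This is where the $r^2$ comes from: roughly $r$ log-covering directions, each requiring $r$-dependent sample complexity.
\item \emph{Mutual information.} This term must be charged by $\lambda_i$, not by the naive $r\lambda_i$.
\end{itemize}

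\textbf{Adaptive stopping rule.} To remove the factor $r$ and to replace $K$ by $\log(1/\mu_i)$, I would make the experiment adaptive.
\begin{itemize}
\item At each round, choose the direction $\xi_t$ (a unit vector in $\R^r$, or a low-rank Gaussian block) adaptively from the current posterior of $I$ given $(J,Y_{<t})$.
\item Stop the experiment on the event $I=i$ after about $C_\delta\log(1/\mu_i)$ rounds. This is the number needed for the posterior mass of the true index to climb from $\mu_i$ to a constant.
\item Measure progress with the potential $\Phi_t=\E[\log(1/\pi_t(I))]$, where $\pi_t$ is the posterior. Show that each round either decreases $\Phi_t$ by a constant multiple of the information it spends, or spends information at most $C\lambda_i$. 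The second bound holds because a single direction sees $\xi^T(U_i-U_J)(U_i-U_J)^T\xi\le 2\lambda_i|\xi|^2$.
\end{itemize}
Summing the information increments up to the stopping time gives $\MI\le C_\delta\sum_i\mu_i\lambda_i\log(1/\mu_i)+C_\delta r^2$.

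I expect this charging step to be the main obstacle. It requires that the information actually gained, not just its upper bound, is comparable to the drop in the weighted posterior entropy. It also requires that separation in operator norm, which is only one direction of size $\delta$, can be found adaptively without paying $\log$ of a net in each round. My first attempt would be to choose $\xi_t$ as the top singular direction of the posterior-averaged $\E_\pi[(U_I-U_{\hat\imath})(U_I-U_{\hat\imath})^T]$, where $\hat\imath$ is the current MAP estimate.

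\textbf{Stationary case.} For \eqref{eq:stationary-entropy}, equal marginals give $\Ent(I)=\Ent(J)$, hence $\Ent(I\mid J)=\Ent(J\mid I)$. In \eqref{eq:weighted-entropy-theorem}, split the indices according to whether $\mu_i\ge m^{-2}$.
\begin{itemize}
\item For $\mu_i\ge m^{-2}$ we have $\log(1/\mu_i)\le 2p$, so these terms contribute at most $2C_\delta p\cE$.
\item For $\mu_i<m^{-2}$ we use $\lambda_i\le2$ and the fact that $x\log(1/x)$ is increasing on small $x$. There are at most $m$ such indices, so their total is at most $2m\cdot m^{-2}\log(m^2)\le C$. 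This constant is absorbed into $C_\delta r^2$ because $r\ge1$.
\end{itemize}
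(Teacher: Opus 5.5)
Your stationary step is fine, but the proof of \eqref{eq:weighted-entropy-theorem} has a genuine gap at exactly the charging step you flag.

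\textbf{The charging step.} The bound $\xi^T(U_i-U_J)(U_i-U_J)^T\xi\le2\lambda_i\norm{\xi}_2^2$ is false pointwise. $\lambda_i$ is the norm of an average over $\law(J\mid I=i)$, and a single $U_J$ can be at operator distance $2$ from $U_i$ while $\lambda_i$ is tiny. The bound holds only after averaging over $J$, and only if the direction and the conditioning history carry no information about $J$ beyond $I$. Choosing $\xi_t$ from the posterior given $(J,Y_{<t})$ destroys exactly this. The paper generates the whole transcript from $I$ and fresh noise, so that $\law(J\mid I,\cH)=\law(J\mid I)$. The reference law $N(U_J^Tx,\Id_n)$ then gives $\MI(I;Y\mid J,\cH=h)\le\sum_i\nu_ix^T\Lambda_ix\le\sum_i\nu_i\lambda_i$.

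Your unweighted potential $\E\log(1/\pi_t(I))$ has total drop $\Ent(I\mid J)$ or $\Ent(I)$, depending on the conditioning. Charging to it is therefore circular and cannot produce the weighted sum. The missing idea is the potential $\phi(\nu)=\sum_i\nu_i\lambda_i\log(1/\nu_i)$. Its expected Bayes drop is exactly $\sum_i\nu_i\lambda_i\KL(R_i\Vert R)$. Lemma~\ref{lem:sourcewise-kl} gives $\KL(R_i\Vert R)\ge c_0$ for \emph{every} $i$ in the support; it uses that the rows $U_i^Tx$ are unit vectors and needs only posterior variance above $\kappa$ in the direction $x$. Together these give \eqref{eq:local-information-charge}, which telescopes to $c_0^{-1}\phi(\mu)$. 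Your heuristic count of $C\log(1/\mu_i)$ rounds becomes correct as a consequence of this drift, namely $\E[N_{\mathrm G}\mid I=i]\le c_0^{-1}\log(1/\mu_i)$ under a public stopping rule. It is not valid as a stopping rule keyed to $I$.

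\textbf{The residual entropy.} This step has no mechanism, and the $r^2$ is not a sample-complexity effect. Stopping ``on the event $I=i$'' is a source-dependent decision whose information you never pay for. Raising the posterior mass of $i$ to a constant also does not identify $I$. A public direction such as your top singular vector can have posterior variance at most $\kappa$ while $I$ is unidentified, for instance when the posterior sits mostly on a wrong index.

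The paper makes every source-dependent choice an explicit finite message. It sends a terminal symbol if $\norm{U_I-M}_{\op}<\delta/2$ for the posterior mean $M$; by separation, at most one index qualifies. Otherwise it sends a net direction $x$ with $\norm{(U_I-M)^Tx}_2\ge\delta/4$, and Gaussian queries continue while the public posterior variance exceeds $\kappa$. Each round's increment of the posterior-mean martingale has expected squared Hilbert--Schmidt norm at least $\kappa$, while the total is at most $\E\norm{U_I}_{\HS}^2=r$. So there are at most $r/\kappa$ rounds on average. Each message costs at most $\log(1+|\mathcal N|)\le C_\delta r$, and the product of these two bounds is the $C_\delta r^2$.
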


\subsection{A posterior identity and a Gaussian information bound}

Suppose the current posterior is $\nu$, the next observation has law
$R_i$ conditional on $I=i$, and $R=\sum_i\nu_iR_i$. For fixed nonnegative
weights $\lambda_i$, set
\[
 \phi(\nu)=\sum_i\nu_i\lambda_i\log\frac1{\nu_i}.
\]
Bayes' formula gives the exact identity
\begin{equation}\label{eq:weighted-drop}
 \phi(\nu)-\E\phi(\nu_{\mathrm{new}})
 =\sum_i\nu_i\lambda_i\KL(R_i\Vert R)\ge0.
\end{equation}
This also holds for deterministic finite-alphabet messages. All expectations
in this identity are conditional on the current history.

\begin{lemma}\label{lem:sourcewise-kl}
Let $a_i\in\R^n$ have norm one, let $\nu$ be a probability distribution,
and put $\bar a=\sum_i\nu_i a_i$ and $v=1-\norm{\bar a}_2^2$. For
$P_i=N(a_i,\Id_n)$ and $P=\sum_i\nu_iP_i$, if $v>\kappa>0$, then
\begin{equation}\label{eq:sourcewise-kl}
 \KL(P_i\Vert P)\ge\frac{\kappa^2}{16}
 \qquad\text{for every }i\text{ with }\nu_i>0.
\end{equation}
\end{lemma}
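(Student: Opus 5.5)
The plan is to reduce to a one-dimensional statement by data processing and then apply the Donsker--Varadhan variational formula with an exponential test function. Fix $i$ with $\nu_i>0$ and project onto the direction $a_i$, i.e. consider $Y=\ip{X}{a_i}$. Since $\KL$ can only decrease under a measurable map, $\KL(P_i\Vert P)\ge\KL(Q_i\Vert Q)$. Here $Q_i=N(1,1)$ is the law of $Y$ under $P_i$, and $Q=\sum_j\nu_jN(b_j,1)$ with $b_j=\ip{a_j}{a_i}\in[-1,1]$. Put $c_j=1-b_j\in[0,2]$ and $\bar c=\sum_j\nu_jc_j=1-\ip{\bar a}{a_i}$. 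By Cauchy--Schwarz, $\ip{\bar a}{a_i}\le\norm{\bar a}_2=\sqrt{1-v}<\sqrt{1-\kappa}\le1-\kappa/2$. Hence
\[
 \bar c\ge\kappa/2 .
\]
This is the only place where the hypothesis $v>\kappa$ is used.

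Next, apply Donsker--Varadhan with the test function $y\mapsto ty$ for some $t>0$:
\[
 \KL(Q_i\Vert Q)\ge t\,\E_{Q_i}Y-\log\E_Qe^{tY}
 =t-\log\sum_j\nu_je^{tb_j+t^2/2}
 =-\frac{t^2}2-\log\sum_j\nu_je^{-tc_j}.
\]
For $x\ge0$ we have $e^{-x}\le1-x+x^2/2$. Combined with $c_j^2\le2c_j$, this gives $\sum_j\nu_je^{-tc_j}\le1-t\bar c+t^2\bar c$. Using $-\log(1-u)\ge u$, we obtain
\[
 \KL(P_i\Vert P)\ge t\bar c-t^2\Big(\bar c+\frac12\Big).
\]
Optimizing at $t=\bar c/(2\bar c+1)$ gives the lower bound $\bar c^2/(4\bar c+2)$.

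It remains to finish with monotonicity. The function $x\mapsto x^2/(4x+2)$ is increasing on $x\ge0$. Since $\bar c\ge\kappa/2$, we get
\[
 \KL(P_i\Vert P)\ge\frac{\kappa^2/4}{2\kappa+2}=\frac{\kappa^2}{8(\kappa+1)}\ge\frac{\kappa^2}{16},
\]
where the last step uses $\kappa<v\le1$.

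The main obstacle is keeping the constants tight enough for the stated $\kappa^2/16$. Plain Pinsker-type bounds or a cruder Taylor control of the exponential moment lose factors. The two things that make it work are the bound $c_j^2\le2c_j$, which makes the quadratic remainder proportional to $\bar c$ rather than to a constant, and the sharper estimate $1-\sqrt{1-\kappa}\ge\kappa/2$. If these still fell short, the fallback would be to keep the exact form $\Ent$-free expression $-t^2/2-\log\sum_j\nu_je^{-tc_j}$ and use convexity in $c_j$ on $[0,2]$. By that convexity, the worst case is a two-point law on $\{0,2\}$ with mean $\bar c$, and that case can be computed explicitly.
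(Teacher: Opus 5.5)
Your proof is correct and is essentially the paper's argument. Both rest on the Donsker--Varadhan inequality with a linear test function, together with an exponential-moment bound for the bounded mean component of the mixture $P$. The paper leaves the direction $z\in\R^n$ free and uses Hoeffding's lemma to get $\KL(P_i\Vert P)\ge\frac14\norm{a_i-\bar a}_2^2\ge\frac14(1-\norm{\bar a}_2)^2$. You instead fix the direction $a_i$ (your data-processing reduction amounts to taking $z=ta_i$) and use the one-sided bound $e^{-x}\le1-x+x^2/2$ with $c_j^2\le2c_j$, reaching $\kappa^2/(8(1+\kappa))\ge\kappa^2/16$ via Cauchy--Schwarz rather than the reverse triangle inequality.
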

\begin{proof}
For every $z\in\R^n$, Hoeffding's inequality and the Gaussian
moment-generating function imply
\[
 \log\E_P\exp\ip z{Y-\bar a}\le\norm z_2^2.
\]
Indeed, $\ip z{a_i}$ lies in an interval of length $2\norm z_2$.
The variational inequality for relative entropy now gives
\[
 \KL(P_i\Vert P)
 \ge\sup_z\{\ip z{a_i-\bar a}-\norm z_2^2\}
 =\frac14\norm{a_i-\bar a}_2^2.
\]
On the other hand,
\[
 \norm{a_i-\bar a}_2\ge1-\norm{\bar a}_2
 =1-\sqrt{1-v}\ge v/2>\kappa/2.
\]
\end{proof}

\subsection{The adaptive experiment}

We construct a transcript from $I$ and independent Gaussian noises. The
construction does not use $J$. At any time, let $\nu$ be the posterior of
$I$ given the public transcript, and put $M=\sum_i\nu_iU_i$.
Fix a $\delta/8$-net $\mathcal N\subset S^{r-1}$ with
\begin{equation}\label{eq:direction-alphabet}
 |\mathcal N|\le(1+16/\delta)^r.
\end{equation}
Set
\[
 b=\delta/4,\qquad \kappa=b^2/2=\delta^2/32.
\]

Each round consists of the following operations.
\begin{enumerate}
\item There is at most one index $i_0$ with
      $\norm{U_{i_0}-M}_{\op}<\delta/2$. If $I=i_0$, send a terminal
      symbol. The decoder identifies $I=i_0$, and the experiment ends.
\item Otherwise send a direction $x\in\mathcal N$ for which
      \begin{equation}\label{eq:direction-witness}
      \norm{(U_I-M)^Tx}_2\ge b.
      \end{equation}
      Use, for example, the first such direction in a fixed ordering.
\item For this fixed $x$, take successive observations
      \begin{equation}\label{eq:gaussian-query}
       Y=U_I^Tx+G,\qquad G\sim N(0,\Id_n),
      \end{equation}
      with fresh independent noise, while the current posterior row variance
      \[
       v=\sum_i\nu_i\norm{U_i^Tx-M^Tx}_2^2
        =1-\norm{M^Tx}_2^2
      \]
      is greater than $\kappa$. Once $v\le\kappa$, begin a new round.
\end{enumerate}
The direction in step 2 exists because $\norm{U_I-M}_{\op}\ge\delta/2$
and $\norm{U_I-M}_{\op}\le2$. Approximating a maximizing left direction
by the net loses at most $\delta/4$. If the variance is already at most
$\kappa$ after the direction message, the round has no Gaussian query.
Continuation and stopping of a query block are determined by its public
history. The source-dependent choices are precisely the finite-alphabet
messages in steps 1 and 2.

\subsection{Termination and the cost of direction messages}

At every active Gaussian query, Lemma~\ref{lem:sourcewise-kl} applies with
$a_i=U_i^Tx$ and $c_0=\kappa^2/16$. Thus the ordinary conditional mutual
information of a query is at least $c_0$. Applying the chain rule first to
finite chronological transcript truncations yields
\begin{equation}\label{eq:query-count}
 c_0\E N_{\mathrm G}\le\Ent(I).
\end{equation}
In particular, every Gaussian block ends almost surely. This conclusion is
obtained before assuming that the full experiment terminates.

Consider one nonterminal round. Write $M_-$ and $M_+$ for the posterior
matrix means before the direction message and at the end of the round.
Conditional on the preceding history and the message $x$, every compatible
index satisfies \eqref{eq:direction-witness}. Conditional variance
decomposition at the end of the block gives, writing $\cH_-$ for the
preceding history,
\begin{align}
 &\E\big[\norm{(M_+-M_-)^Tx}_2^2\mid\cH_-,x\big]\notag\\
 &\qquad=\E\big[\norm{(U_I-M_-)^Tx}_2^2\mid\cH_-,x\big]\notag\\
 &\qquad\quad-\E\big[\norm{(U_I-M_+)^Tx}_2^2\mid\cH_-,x\big]
 \ge b^2-\kappa=\kappa.\label{eq:round-progress}
\end{align}
The matrix posterior means form an $L^2$-bounded martingale. Summing its
squared Hilbert--Schmidt increments over complete rounds, and padding by
constant values after termination, gives
\[
 \sum_{\text{rounds}}\E\norm{M_+-M_-}_{\HS}^2
 \le\E\norm{U_I}_{\HS}^2=r.
\]
The increment here includes the direction message itself. Since
$\norm{(M_+-M_-)^Tx}_2\le\norm{M_+-M_-}_{\HS}$, the expected number of
nonterminal rounds is at most $r/\kappa$. Infinite rounds therefore have
probability zero, and the terminal message eventually identifies $I$.

Each message has at most $1+|\mathcal N|$ possible values. Its conditional
mutual information, also when conditioning on $J$, is at most the logarithm
of this number. Consequently the total information of all such messages is
at most
\begin{equation}\label{eq:message-cost}
 (1+r/\kappa)\log(1+|\mathcal N|)\le C_\delta r^2.
\end{equation}
This is a chronological chain-rule calculation. No additional description
of the number of queries or rounds is needed: the next operation is
determined by the preceding public transcript.

\subsection{Charging partial information to one potential}

At a Gaussian query with current history $\cH=h$, compare its output with
the reference law $N(U_J^Tx,\Id_n)$. The Gaussian relative-entropy identity
and the mixture decomposition of relative entropy give
\begin{equation}\label{eq:conditional-query-info}
 \MI(I;Y\mid J,\cH=h)
 \le\frac12\E[\norm{(U_I-U_J)^Tx}_2^2\mid\cH=h].
\end{equation}
Since the transcript is generated without $J$,
\begin{equation}\label{eq:conditional-independence}
 \law(J\mid I,\cH)=\law(J\mid I).
\end{equation}
The right side of \eqref{eq:conditional-query-info} is therefore
\[
 \sum_i\nu_i x^T\Lambda_i x\le\sum_i\nu_i\lambda_i.
\]
By \eqref{eq:weighted-drop} and Lemma~\ref{lem:sourcewise-kl}, the expected
decrease of $\phi$ at this same query is at least
$c_0\sum_i\nu_i\lambda_i$. Hence
\begin{equation}\label{eq:local-information-charge}
 \MI(I;Y\mid J,\cH=h)
 \le c_0^{-1}\big(\phi(\nu)-\E[\phi(\nu_{\mathrm{new}})\mid\cH=h]\big).
\end{equation}
Every other observation also has nonnegative expected potential decrease.
Summing \eqref{eq:local-information-charge} over Gaussian queries thus
costs at most $c_0^{-1}\phi(\mu)$. Finite truncations justify the summation;
the conditional information of the full transcript is the increasing limit
of that of its finite prefixes. Since the transcript identifies $I$,
\eqref{eq:message-cost} proves \eqref{eq:weighted-entropy-theorem}.

For the stationary conclusion, put $p=\log m$. Since $\lambda_i\le2$,
\begin{align}
 \phi(\mu)
 &=p\sum_i\mu_i\lambda_i+
   \sum_i\mu_i\lambda_i\log\frac1{m\mu_i}\notag\\
 &\le p\cE+2/e.\label{eq:weighted-entropy-initial}
\end{align}
For the last bound, discard negative summands and use
$t\log(1/(mt))\le1/(em)$. Equality of the marginals gives
$\Ent(I\mid J)=\Ent(J\mid I)$, and the constant is absorbed into $C_\delta r^2$.
This completes the proof of Theorem~\ref{thm:entropy}.

\begin{remark}
The charge in \eqref{eq:local-information-charge} is for the information
actually acquired at the current query. It is not a charge for a whole-label
decoding error. The sourcewise lower bound in
Lemma~\ref{lem:sourcewise-kl} is what permits the fixed weights $\lambda_i$
to remain inside the telescoping sum.
\end{remark}

\section{From the entropy inequality to convex separation}
\label{sec:pruning}

\begin{proposition}\label{prop:coisometry-pruning}
For every fixed $\delta>0$, there are $b_\delta>0$ and $C_\delta<\infty$
such that every family satisfying \eqref{eq:coisometries} obeys
\begin{equation}\label{eq:coisometry-pruning}
 \log\cPack(\{U_i\},b_\delta)
 \ge\tfrac34\log m-C_\delta r^2.
\end{equation}
\end{proposition}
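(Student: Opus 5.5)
We argue by contradiction through a greedy pruning, using the stationary form \eqref{eq:stationary-entropy} of Theorem~\ref{thm:entropy}. Let $b=b_\delta$ be small, to be fixed in terms of $\delta$. Build a maximal sequence greedily: repeatedly pick an index whose matrix is at operator distance at least $b$ from the convex hull of the matrices already chosen. Suppose the resulting $\cPack$-sequence has length $N<m^{3/4}e^{-C_\delta r^2}$. We shall produce a joint law of $(I,J)$ with equal marginals that violates \eqref{eq:stationary-entropy}.

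\textbf{Construction of the coupling.} Let $V_1,\dots,V_N$ be the selected matrices. By maximality every $U_i$ lies within $b$ of $\conv\{V_k\}$, so $U_i\approx\sum_k w_{ik}V_k$ up to $b$. Take $I$ uniform on $\{1,\dots,m\}$, so $\Ent(I)=p=\log m$. Given $I=i$, first draw a selected index $K$ with law $w_{i\cdot}$. Then draw $J$ from a reverse kernel, so that the pair $(I,J)$ is exchangeable. The cleanest choice is to let $J$ be an independent copy of $I$ conditionally on $K$: use the joint law $\Prob(I=i,K=k)=w_{ik}/m$, and let $J$ be conditionally i.i.d.\ with $I$ given $K$. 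Then the marginals agree, and
\[
\Ent(I\mid J)\ge \Ent(I\mid K,J)=\Ent(I\mid K)\ge p-\log N .
\]
This exceeds $\tfrac14 p+C_\delta r^2$ as soon as $\log N<\tfrac34p-C_\delta r^2$.

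\textbf{Energy bound.} This is the key step and the main obstacle. We must show $\cE\le\eta(b)$ with $\eta(b)\to0$ as $b\to0$, so that $C_\delta p\,\cE\le\tfrac14 p$. Given $K$, both $U_I$ and $U_J$ are conditional samples whose barycentre is not controlled by $V_K$ directly. The only thing we know is $\sum_k w_{ik}V_k\approx U_i$, not that $U_i\approx V_k$. So we need the extreme-point rigidity of coisometries. If $U_i=\sum_k w_{ik}V_k+E$ with $\norm{E}_{\op}\le b$ and every $V_k$ is a coisometry, then
\[
\Id=U_iU_i^T,\qquad \norm{\textstyle\sum_k w_{ik}V_k}_{\op}\le1 .
\]
Testing against unit vectors $x$ gives
\[
\textstyle\sum_k w_{ik}\norm{(V_k-U_i)^Tx}_2^2=\sum_k w_{ik}\norm{V_k^Tx}^2-\norm{U_i^Tx}^2+O(b)=O(b),
\]
so $\tfrac12\E[(U_i-V_K)(U_i-V_K)^T\mid I=i]\preceq O(b)\Id$ in operator norm. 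By the triangle inequality through $V_K$ we get $\Lambda_i\preceq O(b)\Id$, hence $\lambda_i=O(b)$ and $\cE=O(b)$. The delicate point is making this inequality uniform in direction $x$. It holds because the quadratic-form identity is exact, and only the cross term $2\langle E^Tx,\cdot\rangle$ costs $O(b)$.

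\textbf{Conclusion.} Choosing $b_\delta$ with $C_\delta\cdot O(b_\delta)\le\tfrac18$ and combining with \eqref{eq:stationary-entropy} gives
\[
p-\log N\le C_\delta r^2+\tfrac18 p,
\]
that is, $\log N\ge\tfrac78 p-C_\delta r^2\ge\tfrac34\log m-C_\delta r^2$, as claimed. A second issue is that \eqref{eq:stationary-entropy} requires $\mu_i>0$ and equal marginals. Both hold by construction, and the terms with $\mu_i=0$ are simply omitted.
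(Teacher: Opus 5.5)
\textbf{There is a gap in the energy step.} Your greedy framework is fine, and so is your rigidity computation. If $U_i=\sum_k w_{ik}V_k+E_i$ with $\norm{E_i}_{\op}<b$, then for every unit $x$ one has exactly $\sum_k w_{ik}\norm{(V_k-U_i)^Tx}_2^2=2\ip{E_i^Tx}{U_i^Tx}\le 2b$. The gap is the next sentence, ``by the triangle inequality through $V_K$ we get $\Lambda_i\preceq O(b)\Id$''. The triangle inequality gives
\[
 \Lambda_i\preceq\E\big[(U_i-V_K)(U_i-V_K)^T\mid I=i\big]
 +\E\big[(U_J-V_K)(U_J-V_K)^T\mid I=i\big],
\]
and rigidity controls only the first term. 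In the second term, $J$ is drawn from the reverse kernel $\Prob(J=j\mid K=k)\propto w_{jk}$. Rigidity, by contrast, controls averages over $K\sim w_{j\cdot}$ with $j$ fixed, which is a different average.

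The per-index claim can in fact fail. Take a selected index $i$ with $U_i=V_k$ and $w_{i\cdot}=\delta_k$. Then $\Lambda_i$ is half of $\E[(U_J-V_k)(U_J-V_k)^T\mid K=k]$. In your regime $N\ll m$, this reverse kernel is typically dominated by unselected indices, each of which puts weight at most $2b/\delta^2$ on $V_k$. Nothing prevents all of them from being displaced from $V_k$ in one common row direction, and then $\lambda_i$ is of order $\delta^2$. Your argument does not handle the average over $i$ either. The obvious route through traces gives only $\cE\lesssim rb$, which would force $b\sim 1/r$ instead of a constant $b_\delta$.

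\textbf{The repair is to drop exchangeability altogether.} Inequality \eqref{eq:weighted-entropy-theorem} holds for an arbitrary pair $(I,J)$, with no equal-marginal assumption. Take $I$ uniform, $K\sim w_{I\cdot}$, and let $J$ be the index of the selected matrix $V_K$. Then $\Lambda_i=\frac12\sum_k w_{ik}(U_i-V_k)(U_i-V_k)^T$, so your identity gives exactly $\lambda_i\le b$. Since $J$ takes at most $N$ values, $\Ent(I\mid J)\ge\log m-\log N$. Hence $\log m-\log N\le C_\delta r^2+C_\delta b\log m$, and the choice $b_\delta=(4C_\delta)^{-1}$ finishes the proof.

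Once repaired, your route is genuinely different from the paper's. The paper uses only the stationary form \eqref{eq:stationary-entropy}. It converts it into a bound on the Perron root of $\exp(-\beta\tr(W_iD_{ij}))$ via the Perron variational formula and Sion's minimax theorem. It then extracts the sequence with exponential clocks, using explicit witnesses $W_iU_i$. Your version avoids the minimax machinery and proves more: \emph{every} maximal convexly $b_\delta$-separated sequence has length at least $m^{3/4}e^{-C_\delta r^2}$.
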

\begin{proof}
The assertion is immediate for $m=1$, so assume $p=\log m>0$. Set
\[
 D_{ij}=\tfrac12(U_i-U_j)(U_i-U_j)^T
       =\Id_r-\sym(U_iU_j^T).
\]
For positive semidefinite matrices $W_i$ with $\tr W_i=1$, and a parameter
$\beta>0$, consider the strictly positive matrix
\[
 K(W)_{ij}=\exp(-\beta\tr(W_iD_{ij})).
\]
If $Q=(Q_{ij})$ is a stationary joint distribution, with equal row and
column marginal $\mu$, put
\[
 h(Q)=-\sum_{i,j}Q_{ij}\log(Q_{ij}/\mu_i),\qquad
 \cE(Q)=\sum_i\norm{\sum_jQ_{ij}D_{ij}}_{\op}.
\]
The Perron variational formula is
\begin{equation}\label{eq:perron-variational}
 \log\rho(K(W))=\max_{Q\ \mathrm{stationary}}
 \left\{h(Q)-\beta\sum_{i,j}Q_{ij}\tr(W_iD_{ij})\right\}.
\end{equation}
To verify it, let $v>0$ be a right Perron vector and set
$R_{ij}=K(W)_{ij}v_j/(\rho(K(W))v_i)$. This is a stochastic matrix.
For stationary $Q$, the expression on the right equals
\[
 \log\rho(K(W))-\sum_i\mu_i\KL(Q_{i\cdot}/\mu_i\Vert R_{i\cdot}).
\]
It is at most $\log\rho(K(W))$, with equality for a stationary Markov
chain having transition matrix $R$.

The function in braces in \eqref{eq:perron-variational} is concave and
continuous in $Q$ and affine in $W$ on compact convex sets. The finite
dimensional minimax theorem \cite{Sion} therefore gives
\begin{equation}\label{eq:perron-minimax}
 \min_W\log\rho(K(W))=\max_Q\{h(Q)-\beta\cE(Q)\}.
\end{equation}
Theorem~\ref{thm:entropy} supplies constants $A_\delta\ge1,B_\delta$ with
$h(Q)\le A_\delta p\cE(Q)+B_\delta r^2$. Choose
$\beta=A_\delta p$ and a minimizing $W$. Then
$\rho(K(W))\le\exp(B_\delta r^2)$.

Normalize a positive right Perron vector $\pi$ to have sum one, put
$b_\delta=(4A_\delta)^{-1}$, and define
\[
 C_i=\{j:\tr(W_iD_{ij})<b_\delta\}.
\]
Each $C_i$ contains $i$, and
\begin{equation}\label{eq:cap-mass}
 \pi(C_i)\le e^{\beta b_\delta}(K(W)\pi)_i
 =e^{\beta b_\delta}\rho(K(W))\pi_i.
\end{equation}
Take independent exponential random variables with rates $\pi_i$.
Retain $i$ when its clock is the smallest among the clocks indexed by $C_i$.
The expected number retained is
\[
 \sum_i\frac{\pi_i}{\pi(C_i)}
 \ge m e^{-\beta b_\delta}/\rho(K(W))
 \ge\exp(3p/4-B_\delta r^2).
\]
Choose a realization with at least this many retained indices and order
them by increasing clock values. For an earlier retained $j$ and a later
retained $i$, one has $j\notin C_i$. The matrices $Z_i=W_iU_i$ satisfy
\[
 \norm{Z_i}_*=\tr W_i=1,
 \qquad
 \ip{Z_i}{U_i-U_j}=\tr(W_iD_{ij})\ge b_\delta.
\]
They certify the required convex separation.
\end{proof}

\begin{proposition}\label{prop:bounded-pruning}
Fix $L\ge1$. If $E\subset\R^{r\times n}$ is a finite $a$-separated
family and $\sup_{A\in E}\norm{A}_{\op}\le La$, then, for
$K=\conv(E\cup-E)$,
\begin{equation}\label{eq:bounded-pruning}
 \log|E|\le C_Lr^2+C_L\log\cPack(K,c_La).
\end{equation}
\end{proposition}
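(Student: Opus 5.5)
The plan is to embed the bounded family into a family of separated coisometries, apply Proposition~\ref{prop:coisometry-pruning}, and then remove the auxiliary block by a covering argument in dimension $r^2$. Write $E=\{A_1,\ldots,A_m\}$ and normalize by $s=La$, so that $\norm{A_i/s}_{\op}\le1$. Define the $r\times(n+r)$ matrices
\[
 U_i=\bigl[\,A_i/s\;\;\;S_i\,\bigr],\qquad S_i=\bigl(\Id_r-A_iA_i^T/s^2\bigr)^{1/2}.
\]
Then $U_iU_i^T=\Id_r$. Since deleting columns contracts the operator norm, $\norm{U_i-U_j}_{\op}\ge\norm{A_i-A_j}_{\op}/s\ge1/L$. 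Thus \eqref{eq:coisometries} holds with $\delta=\min(1/L,2)$, which depends only on $L$.

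Proposition~\ref{prop:coisometry-pruning} then yields a sequence $U_{i_1},\ldots,U_{i_N}$ with
\[
 \log N\ge\tfrac34\log m-C_Lr^2
\]
and $\dist(U_{i_k},\conv\{U_{i_l}:l<k\})\ge b:=b_\delta$ in operator norm.

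The main obstacle is that this convex separation may come from the auxiliary blocks $S_i$ rather than from the $A_i$. I handle it by pigeonholing on the $S$-blocks. Each $S_i$ lies in the unit operator ball of the $r^2$-dimensional space of $r\times r$ matrices. By a volumetric estimate, this ball is covered by at most $(1+8/b)^{r^2}$ sets of operator-diameter at most $b/4$. Some cell of this covering therefore contains a subsequence of length
\[
 N'\ge N(1+8/b)^{-r^2},\qquad\text{so}\qquad \log N'\ge\log N-C_Lr^2 .
\]
A subsequence of a convex-separated sequence is still convex separated, because the hulls only shrink.

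Inside this cell I transfer the separation to the $A$-blocks. Suppose, for contradiction, that some convex combination $\sum_l\theta_lA_{i_l}/s$ of earlier terms lies within $b/2$ of $A_{i_k}/s$ in operator norm. Then $\sum_l\theta_lS_{i_l}$ lies within $b/4$ of $S_{i_k}$, since every $S$ in the cell is within $b/4$ of $S_{i_k}$ and averaging preserves this. The operator norm of a block row $[X\;Y]$ is at most $\norm X_{\op}+\norm Y_{\op}$. Hence $U_{i_k}$ would be within $3b/4<b$ of $\conv\{U_{i_l}\}$, which is a contradiction.

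It follows that the $A_{i_l}$ form a convex-separated sequence at level $sb/2=(Lb_\delta/2)\,a$. They lie in $E\subset K$. Taking $c_L=Lb_\delta/2$, we get
\[
 \tfrac34\log m-C_Lr^2\le\log\cPack(K,c_La),
\]
which is \eqref{eq:bounded-pruning} after multiplying by $4/3$ and adjusting $C_L$. The symmetrization $K=\conv(E\cup-E)$ is not needed for this step. Containment $E\subset K$ suffices, and the symmetric hull is presumably the form required later by the covering estimate of Section~\ref{sec:covering}.
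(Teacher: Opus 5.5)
Your proposal is correct and follows the paper's proof essentially step for step. You lift to the coisometries $[A/(La)\ \ (\Id_r-AA^T/(La)^2)^{1/2}]$, apply Proposition~\ref{prop:coisometry-pruning} with $\delta=1/L$, pigeonhole the completion blocks into cells of small operator diameter at a cost of $C_Lr^2$ entropy, and transfer the convex separation to the original blocks via $\norm{[X\ Y]}_{\op}\le\norm X_{\op}+\norm Y_{\op}$. Two small differences are immaterial: the paper bins in the $r(r+1)/2$-dimensional space of symmetric matrices rather than all of $\R^{r\times r}$, and the standard volumetric count for cells of diameter $b/4$ is $(1+16/b)^{r^2}$ rather than $(1+8/b)^{r^2}$.
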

\begin{proof}
Divide the matrices by $La$, so that they are contractions separated by
$1/L$. For each resulting matrix $A$, form
\[
 U_A=[\,A\ \ F_A\,],\qquad F_A=(\Id_r-AA^T)^{1/2}.
\]
These are coisometries and retain the separation. By
Proposition~\ref{prop:coisometry-pruning}, they contain a convexly
$b_{1/L}$-separated sequence of length at least
$\exp(\tfrac34\log|E|-C_Lr^2)$.

The completion matrices $F_A$ lie in the operator unit ball of the
$d=r(r+1)/2$ dimensional space of symmetric matrices. A volumetric cover
by balls of radius $b_{1/L}/4$ has at most
$(1+8/b_{1/L})^d$ members. Retain the largest corresponding subsequence.
Its completion matrices have diameter at most $b_{1/L}/2$.
For any convex combination of preceding points, the completion-coordinate
difference consequently has norm at most $b_{1/L}/2$. Since
\[
 \norm{[\,A\ \ F\,]}_{\op}\le\norm A_{\op}+\norm F_{\op},
\]
the original-coordinate sequence is convexly $b_{1/L}/2$-separated.
The entropy lost in selecting the bin is at most $C_Lr^2$.
Rescaling proves \eqref{eq:bounded-pruning}.
\end{proof}

\begin{remark}
Only convex separation is transferred through the coisometry lift.
No comparison between the bilinear widths before and after that nonlinear
lift is used. The fixed value of $L$ is essential for the uniform constants
in Proposition~\ref{prop:bounded-pruning}.
\end{remark}

\section{A convex-packing bound from bilinear width}
\label{sec:covering}

Let $K\subset\R^{r\times n}$ be compact, convex, and symmetric. Define
\[
 \alpha(X)=\sup_{A\in K}\ip AX,
 \qquad W=\E\alpha(gh^T),
 \qquad B_*=\{X:\norm X_*\le1\}.
\]
We may assume $W>0$: if $W=0$, then $K=\{0\}$ by
\eqref{eq:one-matrix-lower}, and the convex-packing conclusion is immediate.
The function $\alpha$ may be a seminorm. Its unit ball is denoted by
$K^\circ$; the covering arguments below remain valid for a seminorm.

\begin{lemma}[Gaussian cross terms]\label{lem:cross}
Let $V\in\R^{n\times k}$ satisfy $\norm V_{\HS}\le1$, and let $G$ be
an $r\times k$ standard Gaussian matrix. Then
\begin{equation}\label{eq:cross-bound}
 \E\alpha(GV^T)\le\sqrt3 W.
\end{equation}
Similarly, if $\norm U_{\HS}\le1$ and $H$ is an $n\times k$ standard
Gaussian matrix, then $\E\alpha(UH^T)\le\sqrt3 W$.
\end{lemma}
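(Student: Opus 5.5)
The plan is to reduce \eqref{eq:cross-bound} to a Gaussian comparison between the centered process $X_A=\ip A{GV^T}$, $A\in K$, and a Gaussian process extracted from the bilinear process $g^TAh$ that defines $W$. The second assertion follows from the first by transposition: replace $K$ by $K^T$, which leaves $W$ unchanged since $g^TAh=h^TA^Tg$. So only the first assertion needs proof.

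\textbf{Step 1 (diagonalize $V$).} Write the singular value decomposition $VV^T=\sum_k s_k^2u_ku_k^T$, with orthonormal $u_k\in\R^n$ and $\sum_k s_k^2=\norm V_{\HS}^2\le1$. By rotation invariance of $G$, the matrix $GV^T$ has the same law as $\sum_k s_k g_ku_k^T$, where the $g_k$ are independent standard Gaussian vectors in $\R^r$. Hence
\[
 \E\alpha(GV^T)=\E\sup_{A\in K}\sum_k s_k\,g_k^TAu_k ,
\]
and the increments of this Gaussian process satisfy
\[
 \E(X_A-X_B)^2=\sum_k s_k^2\norm{(A-B)u_k}_2^2\le \norm{A-B}_{\op}^2 .
\]

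\textbf{Step 2 (a Gaussian minorant of $W$).} Let $P$ be the orthogonal projection onto $\mathrm{span}\{u_k\}$. Since $\alpha$ is convex and the component $(\Id-P)h$ is independent of $Ph$ and centered, Jensen's inequality gives $W\ge\E\alpha\bigl(g(Ph)^T\bigr)=\E\sup_{A\in K}\sum_k h_k\,g^TAu_k$. I would then produce, from the pair $(g,h)$, a genuinely Gaussian process with the same index set whose increments dominate those of $X$. The natural candidate comes from decoupling $g$ across the coordinates $h_k$. Conditionally on $g$, the process $Y_A=\sum_kh_k\,g^TAu_k$ is Gaussian with increments $\sum_k(g^T(A-B)u_k)^2$. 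Averaging over $g$ gives $\sum_k\norm{(A-B)u_k}_2^2$, which dominates the increments of $X$, because $s_k^2\le1$ and the sum is weighted.

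The difficulty is that Sudakov--Fernique needs increment domination for fixed realizations, not on average. I would therefore compare $X$ with the symmetrized process
\[
 Z_A=\sum_k h_k\,g_k'^TAu_k+\ldots
\]
built by splitting the Gaussian vector into independent pieces. Each piece is controlled by $W$ through the contraction principle: conditioning removes the extra coordinates. I expect the constant $\sqrt3$ to come from a bound of the form $\E(X_A-X_B)^2\le\E(Z_A-Z_B)^2$, with $Z$ a sum of three processes each of expected supremum at most $W$, combined in quadrature via $\sqrt{1+1+1}$.

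\textbf{Main obstacle.} The hard step is the increment comparison in Step 2. The defining process $g^TAh$ is not Gaussian, its conditional increments $\norm{(A-B)h}_2$ are random, and the operator norm only enters after averaging. My first attempt would be the Chevet-type route: apply Sudakov--Fernique conditionally on $h$ to the process $\sum_k s_k g_k^TAu_k$ versus $g^TAw$ with $w=\sum_kh_ku_k$. The pointwise deficit would be absorbed by an auxiliary term $\gamma\,\norm{A}\,\cdot$ (a one-dimensional Gaussian times a Lipschitz quantity). That auxiliary term would then be bounded by $W$ using \eqref{eq:one-matrix-lower} and the symmetry of $K$.
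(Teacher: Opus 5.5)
\paragraph{There is a genuine gap.} Your Steps 1 and 2 are correct: the diagonalization $GV^T\overset{d}{=}\sum_k s_kg_ku_k^T$, the increment bound, and the Jensen reduction $W\ge\E\alpha(gw^T)$ with $w=Ph$. But the lemma consists precisely of the comparison you leave open, and the routes you sketch do not close it.

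In the Chevet-type route, fix $w$. The deficit $\sum_ks_k^2\norm{(A-B)u_k}_2^2-\norm{(A-B)w}_2^2$ is strictly positive whenever $(A-B)w=0$ but some $(A-B)u_k\ne0$. The increment $(f(A)-f(B))^2$ of an auxiliary term $\gamma f(A)$, however, vanishes on level sets of $f$. Take, say, $K$ the operator unit ball with $r\ge2$ and $\rank V\ge2$. Then $K\cap\{X:Xw=0\}$ contains a two-dimensional disc on which $\sum_ks_k^2\norm{Xu_k}_2^2$ is a norm, and a Lipschitz $f$ absorbing the deficit would have to be injective on that disc, which is impossible.

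The process $Z_A=\sum_kh_kg_k'^TAu_k+\cdots$ fails for two reasons. It is built from products of Gaussians, so Sudakov--Fernique does not apply to it. And expected suprema of a sum of three processes add linearly, so the factor $\sqrt{1+1+1}$ would need three i.i.d.\ Gaussian summands, which you have not constructed. In the paper the $\sqrt3$ comes instead from $\E h_j^4=3$ together with $\norm V_{\HS}\le1$.

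\paragraph{The missing idea.} Avoid increment comparison entirely and produce a genuinely Gaussian matrix below $W$ by convexity.
\begin{enumerate}
\item Take $h$ independent of $G$ and set $z=V^Th/\norm{V^Th}_2$. Since $z$ is a unit vector depending only on $h$, the vector $Gz$ is standard Gaussian and independent of $h$, so $W=\E\alpha(Gzh^T)$.
\item Condition on $G$ and apply Jensen in $h$. This gives $W\ge\E\alpha(GD)$ with $D=\E(zh^T)$.
\item Compute $D$ in the singular bases of $V$. Sign symmetry kills the off-diagonal entries. Applying Jensen under the probability density $h_j^2$, where $h_j=\ip h{u_j}$, the diagonal entries satisfy
\[
 d_j=s_j\E\frac{h_j^2}{(\sum_\ell s_\ell^2h_\ell^2)^{1/2}}\ge\frac{s_j}{(\sum_\ell s_\ell^2+2s_j^2)^{1/2}}\ge\frac{s_j}{\sqrt3}.
\]
\item Hence $VV^T=\sum_js_j^2u_ju_j^T\preceq3\sum_jd_j^2u_ju_j^T=3D^TD$. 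So $\sqrt3\,GD$ has the law of $GV^T$ plus an independent centered Gaussian matrix, and a second use of Jensen gives $\E\alpha(GV^T)\le\sqrt3\E\alpha(GD)\le\sqrt3W$.
\end{enumerate}
In your notation, this amounts to writing $g=\sum_kz_kg_k$ inside $\E\alpha(gw^T)$, with $z_k=s_kh_k/(\sum_\ell s_\ell^2h_\ell^2)^{1/2}$, and averaging over $h$ with the $g_k$ frozen. Your transposition argument for the second assertion is fine.
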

\begin{proof}
Assume $V\ne0$, let $h\sim N(0,\Id_n)$ be independent of $G$, and set
$z=V^Th/\norm{V^Th}_2$. Conditional on $h$, $Gz$ is a standard Gaussian
vector, so it is independent of $h$. With $D=\E(zh^T)$, Jensen's inequality
therefore gives $\E\alpha(GD)\le W$.

In singular coordinates for $V$, with nonzero singular values $\sigma_j$,
the corresponding singular values of $D$ are
\[
 d_j=\sigma_j\E\frac{h_j^2}
                    {(\sum_\ell\sigma_\ell^2h_\ell^2)^{1/2}}.
\]
Under the probability measure with density $h_j^2$, Jensen's inequality
and Cauchy--Schwarz show that
\[
 \E\frac{h_j^2}{(\sum_\ell\sigma_\ell^2h_\ell^2)^{1/2}}
 \ge\frac1{(\sum_\ell\sigma_\ell^2+2\sigma_j^2)^{1/2}}
 \ge\frac1{\sqrt3}.
\]
Thus $VV^T\preceq3D^TD$. Gaussian covariance domination, realized by
adding an independent centered Gaussian matrix, implies
$\E\alpha(GV^T)\le\sqrt3\E\alpha(GD)\le\sqrt3W$.
The second assertion follows by transposition.
\end{proof}

\begin{lemma}[Uniform Gaussian small balls]\label{lem:dual-cover}
For every $t>0$,
\begin{equation}\label{eq:dual-cover}
 \log N\left(B_*,CW(t^{-1/2}+rt^{-1})K^\circ\right)\le2t+\log4.
\end{equation}
\end{lemma}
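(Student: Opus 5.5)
The plan is a dual Sudakov (Gaussian shift) argument, adapted to the bilinear structure. The randomness will be a product $\sigma^2GH^T$ of two Gaussian matrices rather than a single Gaussian matrix. For the shift, write each $X\in B_*$ as a product $X=UV^T$ with $U\in\R^{r\times r}$, $V\in\R^{n\times r}$ and $\norm U_{\HS}^2,\norm V_{\HS}^2\le1$. One way is to take $X=P\Sigma Q^T$ (singular value decomposition), $U=P\Sigma^{1/2}$ and $V=Q\Sigma^{1/2}$, so that $\norm U_{\HS}^2=\norm V_{\HS}^2=\norm X_*\le1$.

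Let $G$ be $r\times r$ and $H$ be $n\times r$, both standard Gaussian and independent. Fix $\sigma>0$ and put $Z=\sigma^2GH^T$. The key algebraic identity is
\[
 \sigma^2\big(G+U/\sigma\big)\big(H+V/\sigma\big)^T
 =X+\sigma\big(GV^T+UH^T\big)+\sigma^2GH^T .
\]
Hence, for $R>0$, the event $\{\alpha(Z-X)\le R\}$ under the law of $(G,H)$ shifted by $a=(U/\sigma,V/\sigma)$ contains the shifted image of
\[
 S_X=\big\{(G,H):\ \sigma\alpha(GV^T)+\sigma\alpha(UH^T)+\sigma^2\alpha(GH^T)\le R\big\}.
\]
This set is invariant under $(G,H)\mapsto(-G,-H)$, because $\alpha$ is a seminorm and the quadratic term is even.

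For a set $S$ that is symmetric (not necessarily convex), Jensen's inequality applied to the density $e^{-\ip ax-|a|^2/2}$ on $S$ gives $\gamma(S+a)\ge e^{-|a|^2/2}\gamma(S)$, since the average of $\ip ax$ over $S$ vanishes. Here $|a|^2\le2/\sigma^2$. To bound $\gamma(S_X)$, use Lemma~\ref{lem:cross} for the two cross terms, each at most $\sqrt3W$ in expectation. Writing $GH^T$ as a sum of $r$ independent copies of $gh^T$ gives $\E\alpha(GH^T)\le rW$. Markov's inequality then yields $\gamma(S_X)\ge1/2$ once
\[
 R=2\big(2\sqrt3\,\sigma W+\sigma^2 rW\big).
\]
Consequently $\Prob\big(\alpha(Z-X)\le R\big)\ge\tfrac12e^{-1/\sigma^2}$ for every $X\in B_*$. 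This bound is uniform in $X$, and the same $Z$ is used for every $X$.

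Now take $\sigma^2=1/t$, so that $R\le CW(t^{-1/2}+rt^{-1})$. Take a maximal family $X_1,\dots,X_N\subset B_*$ with $\alpha(X_i-X_j)>2R$ for $i\ne j$. The events $\{\alpha(Z-X_i)\le R\}$ are pairwise disjoint by the triangle inequality for $\alpha$. Each has probability at least $\tfrac12e^{-t}$, so $N\le2e^t$. By maximality, the translates $X_i+2RK^\circ$ cover $B_*$. This gives \eqref{eq:dual-cover} with $\log N\le t+\log2\le2t+\log4$ after adjusting $C$.

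The main obstacle is the step that controls the cross terms $\sigma(GV^T+UH^T)$ uniformly over $X$. A naive single-Gaussian-matrix shift would instead require $\E\alpha$ of a full $r\times n$ Gaussian matrix, and that loses the correct dependence on $t$. Lemma~\ref{lem:cross} handles this uniformly because $\norm U_{\HS}$ and $\norm V_{\HS}$ are both at most $1$. The second point to check carefully is that the shift inequality needs only symmetry, not convexity, of $S_X$, since $S_X$ contains the nonconvex term $\alpha(GH^T)$.
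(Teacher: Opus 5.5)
Your proof is correct and follows the paper's argument. The shared steps are the factorization $X=UV^T$ with $\norm U_{\HS},\norm V_{\HS}\le1$, the bilinear Gaussian $\sigma^2GH^T$ with Lemma~\ref{lem:cross} controlling the cross terms, a Gaussian shift by $(U/\sigma,V/\sigma)$, and disjoint $\alpha$-balls with $\sigma=t^{-1/2}$. The only deviation is the shift inequality: you use $\gamma(S+a)\ge e^{-\norm{a}_2^2/2}\gamma(S)$, valid because $S_X$ is symmetric, whereas the paper uses the symmetry-free Cauchy--Schwarz bound $\gamma(A+z)\ge\gamma(A)^2e^{-\norm z_2^2}$, which is why it ends with $2t+\log4$ rather than your sharper $t+\log2$.
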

\begin{proof}
Factor any $X\in B_*$ as $X=UV^T$, padding
with zero columns to use exactly $r$ columns, so that
$\norm U_{\HS},\norm V_{\HS}\le1$. Let $G\in\R^{r\times r}$ and
$H\in\R^{n\times r}$ be independent standard Gaussian matrices.
Lemma~\ref{lem:cross} and the triangle inequality give
\begin{align*}
 \E\alpha(\sigma^2GH^T+\sigma GV^T+\sigma UH^T)
 &\le \sigma^2 rW+2\sqrt3\sigma W\\
 &\le C W(\sigma+r\sigma^2).
\end{align*}
Let $u=C_0W(\sigma+r\sigma^2)$ with $C_0$ large enough. The event that
this seminorm is at most $u$ has Gaussian probability at least $1/2$.
Translate $(G,H)$ by $(U/\sigma,V/\sigma)$. Its image lies in
\[
 \{(G',H'):\alpha(\sigma^2G'H'^T-X)\le u\}.
\]
For standard Gaussian measure $\gamma$ and any measurable set $A$,
Cauchy--Schwarz applied to the Gaussian likelihood ratio gives
\[
 \gamma(A+z)\ge\gamma(A)^2e^{-\norm z_2^2}.
\]
The translation just used has squared norm at most $2/\sigma^2$.
Consequently, if $\nu_\sigma$ is the law of $\sigma^2GH^T$, then
\begin{equation}\label{eq:uniform-small-ball}
 \nu_\sigma(X+uK^\circ)\ge\tfrac14e^{-2/\sigma^2}
 \qquad(X\in B_*).
\end{equation}
A set of points of $B_*$ at pairwise $\alpha$-distance greater than $2u$
has disjoint $u$-balls, and so has cardinality at most $4e^{2/\sigma^2}$.
A maximal such set covers $B_*$ by $2u$-balls. Put $\sigma=t^{-1/2}$
and absorb the numerical factors into $C$.
\end{proof}

\begin{proposition}\label{prop:convex-upper}
For every $a>0$,
\begin{equation}\label{eq:convex-upper}
 \log\cPack(K,a)
 \le C\left[1+(W/a)^2+rW/a\right].
\end{equation}
\end{proposition}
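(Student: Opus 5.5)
Given a convexly $a$-separated sequence $x_1,\ldots,x_N$ in $K$, I will pass to a dual description. For each $j\ge2$, Hahn--Banach in the operator-norm/nuclear-norm duality gives some $Z_j\in B_*$ with
\[
 \ip{Z_j}{x_j}-\sup_{i<j}\ip{Z_j}{x_i}\ge a.
\]
Equivalently, the half-space functional $Z_j$ separates $x_j$ from every earlier point by at least $a$.

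Next I apply Lemma~\ref{lem:dual-cover} with a parameter $t$ chosen so that
\[
 CW(t^{-1/2}+rt^{-1})\le a/4.
\]
Concretely, take $t=C'[(W/a)^2+rW/a]$, which makes each term at most $a/8$. This covers $B_*$ by at most $4e^{2t}$ translates $Y+\tfrac a4K^\circ$.

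If $Z_j$ and $Z_k$ with $j<k$ fall into the same cell, then $\alpha(Z_j-Z_k)\le a/2$. Since $x_i\in K$, this gives $|\ip{Z_j-Z_k}{x_i}|\le a/2$ for every $i$. I then want to derive a contradiction, or at least a bounded multiplicity, from the separations. From $Z_k$ separating $x_k$ from $x_j$,
\[
 \ip{Z_k}{x_k-x_j}\ge a,
\]
and swapping $Z_k$ for $Z_j$ costs at most $a$ across two evaluations. This is the delicate point. Comparison alone only gives $\ip{Z_j}{x_k-x_j}\ge0$, which is not a contradiction, so a cell can contain many indices.

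To handle this I will iterate within a cell. Inside one cell, all functionals agree with a single $Z=Z_{j_1}$ up to $a/4$ on $K$, relative to the center. So the restricted sequence satisfies $\ip Z{x_k}\ge\ip Z{x_i}+a/2$ for earlier $i$ in the cell. Thus along the cell the values $\ip Z{x_k}$ increase by at least $a/2$ at each step. These values lie in $[-\alpha(Z),\alpha(Z)]$, and $\alpha(Z)\le\sup_K\norm{\cdot}_{\op}\lesssim W$ by \eqref{eq:one-matrix-lower}. Hence a cell holds at most $1+CW/a$ indices. This yields
\[
 N\le 4e^{2t}(1+CW/a).
\]
Taking logarithms, and using $\log(1+W/a)\le C(1+(W/a)^2)$, gives \eqref{eq:convex-upper}.

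The main obstacle I expect is this within-cell step. I need to check that the approximation is measured against the center correctly: the bound on $\alpha(Z_j-Z_k)$ must give an error at most $a/4$ per evaluation after halving the covering radius. If that accounting fails, I will shrink the covering radius to $a/8$, which changes only constants in $t$.
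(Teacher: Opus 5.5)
Your proof is correct and follows essentially the same route as the paper: Hahn--Banach witnesses in $B_*$, the cover of $B_*$ from Lemma~\ref{lem:dual-cover} at $\alpha$-radius of order $a$, and a monotone count inside each cell using $\sup_K\norm{\cdot}_{\op}\le(\pi/2)W$. The only difference is bookkeeping: the paper tracks the diagonal values $\ip{Y_i}{A_i}$, so that $\ip{Y_j}{A_j}-\ip{Y_i}{A_i}=\ip{Y_j}{A_j-A_i}+\ip{Y_j-Y_i}{A_i}\ge 3a/4$ costs only one error term, whereas your fixed-functional version pays two; you must therefore use the cell center, or your fallback radius $a/8$ with $Z=Z_{j_1}$, because $Z_{j_1}$ with radius-$a/4$ cells yields only $\ip{Z_{j_1}}{x_k-x_i}\ge 0$.
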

\begin{proof}
Suppose $A_1,\ldots,A_N\in K$ are convexly $a$-separated. By
Hahn--Banach, there are $Y_j\in B_*$ such that
\[
 \ip{Y_j}{A_j-A_i}\ge a\qquad(i<j).
\]
For $j=1$ choose any $Y_1\in B_*$. Use Lemma~\ref{lem:dual-cover} with
\[
 t=C_0[1+(W/a)^2+rW/a]
\]
to cover $B_*$ by at most $\exp(Ct)$ balls of $\alpha$-radius $a/8$.
If $Y_i,Y_j$ lie in the same such ball and $i<j$, then
\[
 \ip{Y_j}{A_j}-\ip{Y_i}{A_i}
 =\ip{Y_j}{A_j-A_i}+\ip{Y_j-Y_i}{A_i}\ge3a/4.
\]
Write $D=\sup_{A\in K}\norm A_{\op}$. All the numbers
$\ip{Y_i}{A_i}$ lie in $[-D,D]$, so each ball contains at most
$1+8D/(3a)$ of the witnesses. By \eqref{eq:one-matrix-lower},
$D\le(\pi/2)W$. It follows that
\[
 \log N\le Ct+\log(1+CW/a)\le C[1+(W/a)^2+rW/a].
\]
\end{proof}

\begin{corollary}[Bounded-radius row estimate]\label{cor:bounded-row}
Fix $L\ge1$. If $E\subset\R^{r\times n}$ is $a$-separated and
$\sup_{A\in E}\norm A_{\op}\le La$, then
\begin{equation}\label{eq:bounded-row}
 \log|E|\le C_L\left[r^2+1+(\W(E)/a)^2+r\W(E)/a\right].
\end{equation}
In particular,
\begin{equation}\label{eq:bounded-row-sqrt}
 \sqrt{\log|E|}\le C_L(1+r+\W(E)/a).
\end{equation}
\end{corollary}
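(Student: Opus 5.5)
The plan is to combine Proposition~\ref{prop:bounded-pruning} with Proposition~\ref{prop:convex-upper}, applied to the symmetric convex hull $K=\conv(E\cup-E)$. The link between them is that the bilinear width $W$ of $K$ is controlled by $\W(E)$. Concretely, $\alpha(gh^T)=\sup_{A\in K}\ip A{gh^T}=\sup_{A\in E}|g^TAh|$: a linear functional on $\conv(E\cup -E)$ attains its supremum on $E\cup-E$, and $\ip A{gh^T}=g^TAh$. Hence $W=\W(E)$ exactly. $K$ is compact, convex and symmetric, so the hypotheses of Section~\ref{sec:covering} hold.

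The steps are as follows. First, Proposition~\ref{prop:bounded-pruning} gives
\[
\log|E|\le C_Lr^2+C_L\log\cPack(K,c_La).
\]
Second, Proposition~\ref{prop:convex-upper} is applied at separation $c_La$ (the hypothesis $a>0$ is all it needs), giving
\[
\log\cPack(K,c_La)\le C\bigl[1+(\W(E)/(c_La))^2+r\W(E)/(c_La)\bigr].
\]
Third, since $c_L$ depends only on $L$, the factors $c_L^{-1}$ and $c_L^{-2}$ are absorbed into a new constant $C_L$, which yields \eqref{eq:bounded-row}. If $E$ has one element, the bound is trivial.

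For \eqref{eq:bounded-row-sqrt}, take square roots and use $\sqrt{x+y}\le\sqrt x+\sqrt y$. The cross term is handled by AM--GM, $r\W(E)/a\le \tfrac12 r^2+\tfrac12(\W(E)/a)^2$. Thus the right-hand side of \eqref{eq:bounded-row} is at most $C_L(1+r+\W(E)/a)^2$, and the square root gives the claim.

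There is no real obstacle here. The only point needing care is the identification $W=\W(E)$, which rests on the supremum of a linear functional over the symmetric hull being the supremum of the absolute value over $E$. The constant bookkeeping, with $c_L$ entering through the rescaled separation, is routine.
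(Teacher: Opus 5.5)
Your proposal is correct and matches the paper's proof: apply Proposition~\ref{prop:bounded-pruning}, then apply Proposition~\ref{prop:convex-upper} to $K=\conv(E\cup-E)$ at separation $c_La$, using that this symmetric hull has bilinear width $W=\W(E)$. Your check of that identity, the absorption of $c_L$ into $C_L$, and the square-root step spell out exactly what the paper's one-line proof leaves implicit.
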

\begin{proof}
Apply Proposition~\ref{prop:bounded-pruning} and then
Proposition~\ref{prop:convex-upper} to $K=\conv(E\cup-E)$. Passing to
this symmetric convex hull preserves $\W$.
\end{proof}

\section{A common compression with controlled radius}
\label{sec:compression}

We first record the elementary Gaussian estimates needed below. If $G$ is
a $k\times d$ standard Gaussian matrix and $E\in\R^{d\times n}$, then
\begin{equation}\label{eq:gaussian-product-norm}
 \E\norm{GE}_{\op}\le\norm E_{\HS}+\sqrt{k}\norm E_{\op}.
\end{equation}
Indeed, compare the Gaussian process $u^TGEv$, indexed by unit $u,v$,
with $\norm E_{\op}\ip gu+\ip h{Ev}$. The latter has larger increment
variances: the difference is
$2(1-\ip u{u'})(\norm E_{\op}^2-\ip{Ev}{Ev'})\ge0$.
The Gaussian comparison inequality proves
\eqref{eq:gaussian-product-norm}.

If $P=k^{-1/2}G$ and $V\subset\R^d$ has dimension at most $d_0$, then,
for $k\ge C(d_0+1)$, with probability at least $0.999$,
\begin{equation}\label{eq:gaussian-subspace}
 \tfrac12\norm v_2\le\norm{Pv}_2\le2\norm v_2\qquad(v\in V).
\end{equation}
For completeness, take a $1/8$-net of the unit sphere of $V$, of size at
most $17^{d_0}$. The chi-square concentration bound gives norms in
$[3/4,5/4]$ on that net except with probability
$2\exp(Cd_0-ck)$. The net approximation yields
\eqref{eq:gaussian-subspace}.

\begin{lemma}\label{lem:compression}
Let $A_1,\ldots,A_m\in\R^{d\times n}$ be $a$-separated and satisfy
\[
 \sup_i\norm{A_i}_{\op}\le La,
 \qquad \sup_i\norm{A_i}_{\HS}\le H,
 \qquad L\ge1.
\]
There is a family $F\subset\R^{k\times N}$, for some $N$, such that
\begin{align}
 k&\le C(1+H^2/a^2),\label{eq:compression-dim}\\
 \log|F|&\ge\tfrac12\log(m/2),\label{eq:compression-card}\\
 \norm{B-B'}_{\op}&\ge ca\quad(B\ne B'\in F),\label{eq:compression-sep}\\
 \sup_{B\in F}\norm B_{\op}&\le CLa,
 \qquad \W(F)\le C\W(\{A_i\}).\label{eq:compression-radius-width}
\end{align}
In particular, the dimension $k$ is a common row dimension.
\end{lemma}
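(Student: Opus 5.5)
The plan is to compress all the $A_i$ on the left by one random Gaussian map $P=k^{-1/2}G$, with $G$ a $k\times d$ standard Gaussian matrix and $k=\lceil C_1(1+H^2/a^2)\rceil$, and to take $F$ to be a suitable subfamily of $\{PA_i\}$ (with $N=n$). The four requirements are then handled separately: the radius by \eqref{eq:gaussian-product-norm}, the separation by a one-dimensional version of \eqref{eq:gaussian-subspace} together with a counting argument, and the width by a contraction-type argument. Each of these holds with probability bounded below, so a single realization of $G$ satisfying all of them at once exists.

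\emph{Radius.} By \eqref{eq:gaussian-product-norm}, $\E\norm{PA_i}_{\op}\le k^{-1/2}H+La\le CLa$, using $k\ge H^2/a^2$. Since $\norm{GA_i}_{\op}$ is $\norm{A_i}_{\op}$-Lipschitz in $G$, Gaussian concentration gives $\norm{PA_i}_{\op}\le CLa+k^{-1/2}La\,s$ with probability $1-e^{-s^2/2}$. A fixed $C$ cannot be union bounded over all $m$ indices, so I will instead use Markov's inequality: with probability at least $0.9$, all but $m/10$ indices satisfy $\norm{PA_i}_{\op}\le C'La$.

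\emph{Separation.} For each pair $i\ne j$, pick a unit vector $v$ with $\norm{(A_i-A_j)v}_2\ge a$ and apply \eqref{eq:gaussian-subspace} to the line spanned by $(A_i-A_j)v$. This gives $\norm{P(A_i-A_j)}_{\op}\ge a/2$, except with probability $\exp(-ck)$. A union bound over pairs is hopeless when $\log m\gg k$, so this is where the halving in \eqref{eq:compression-card} enters. The expected number of bad pairs is at most $m^2e^{-ck}$. If $m^2e^{-ck}\le m/4$, delete one endpoint of each bad pair; this keeps at least $m/2$ elements, and \eqref{eq:compression-card} holds. If $m$ is much larger than $e^{ck}$, I will instead restrict first to a random subset of size $\sqrt m$ or about $e^{ck}$, which keeps the number of bad pairs below half the subset size. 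The bound $\tfrac12\log(m/2)$ is exactly what this pairing-and-deletion argument tolerates, so I expect a subsample of size $\sqrt{m/2}$ to be the intended route. Concretely: choose a random subset $S$ of $\sqrt{m}$ indices. Its expected number of bad pairs is at most $m e^{-ck}$. The case $e^{ck}$ small relative to the subset must be handled, and this is where the condition $k\ge C$ and the limited bound $\tfrac12$ must be balanced. I expect this counting to be routine but fiddly.

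\emph{Width.} This is the main obstacle. Conditioning on $G$, we have $g^TPA_ih=(P^Tg)^TA_ih$, and $P^Tg$ is Gaussian with covariance $k^{-1}G^TG$. Since $\E_G\, k^{-1}G^TG=\Id_d$, Jensen's inequality in the wrong direction does not immediately help. The cleaner route is to write $P^Tg=k^{-1/2}G^Tg$ and note that, jointly in $(G,g)$, $G^Tg\stackrel{d}{=}\norm{g}_2\,g'$ with $g'\sim N(0,\Id_d)$ independent of $\norm g_2$. Hence
\[
\E_{G}\W(\{PA_i\})=\E\bigl[k^{-1/2}\norm g_2\bigr]\,\W(\{A_i\})\le\W(\{A_i\}).
\]
By Markov's inequality, $\W(\{PA_i\})\le 10\,\W(\{A_i\})$ with probability at least $0.9$, and passing to a subfamily only decreases $\W$.

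Intersecting the good events for radius, separation and width, and then performing the deletions, yields $F$ with \eqref{eq:compression-dim}--\eqref{eq:compression-radius-width}. The real care lies in arranging the deletions so that the cardinality bound \eqref{eq:compression-card} survives.
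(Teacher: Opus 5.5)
\textbf{There is a genuine gap in the separation step.} Your radius and width steps are fine; the width computation is exactly the one the paper uses for $PT$. The separation step, however, cannot be repaired within a left-only compression.

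The per-pair failure probability is $e^{-ck}$, with $k\asymp1+H^2/a^2$ independent of $m$. A random subset of size $s$ contains about $s^2e^{-ck}$ bad pairs, and this is below $s/2$ only when $s\lesssim e^{ck}$. So pairing-and-deletion yields $\log|F|\gtrsim\min\{\log m,ck\}$, not $\tfrac12\log(m/2)$. The regime $\log m\gg k$ is exactly the one the lemma must handle: in Section~\ref{sec:localization} the row dimension is of order $1+M^2/(s^2\sqrt q)$, while the entropy is of order $q$.

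In fact no choice of $P$ and no subfamily of $\{PA_i\}$ can work. Take unit vectors $x_1,\dots,x_m\in\R^d$ with $\norm{x_i-x_j}_2\ge1$ and $m=e^{cd}$, and set $A_i=a\,x_ie_1^T$. Then $L=1$, $H=a$ and $k=O(1)$. But every $PA_i=a(Px_i)e_1^T$ lies in a $k$-dimensional subspace isometric to $\ell_2^k$. A subfamily with radius at most $CLa$ and separation at least $ca$ therefore has at most $(1+2C/c)^k=O(1)$ elements, while $\log m\to\infty$ as $d\to\infty$. The labels are carried by the column vectors $x_i$, which left compression squeezes into $\R^k$. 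The right compression $A_iQ=a\,x_i(Q^Te_1)^T$, transposed, keeps them intact with $k$ rows.

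\textbf{Missing ideas from the paper's proof.}
\begin{enumerate}
\item \emph{Low-rank split.} Split $A_i=B_i+E_i$ at a singular-value threshold $\tau\ll a$, so that $\rank B_i\le H^2/\tau^2\lesssim k$ and $\norm{E_i}_{\op}\le\tau$. This makes the good event a per-index event: $P$ embeds the range of $B_i$, $Q^T$ embeds its row space, and $PE_i$, $E_iQ$, $PE_iQ$ are small. It has probability at least $0.99$, so Markov's inequality leaves $m/2$ good indices with no union bound over pairs.
\item \emph{Two-sided compression.} Compress on both sides via $\Phi$ from \eqref{eq:Phi}, and use the exact identity $D=L_iPD+DQR_j-L_iPDQR_j$ for $D=B_i-B_j$. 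It shows that for every pair of good indices simultaneously, one of $PD$, $DQ$, $PDQ$ has operator norm $\gtrsim a$.
\item \emph{Source of the factor $\tfrac12$.} It comes from $m/2\le\Pack(\{X_i\},t)\,\Pack(\{Y_i\},t)$ for the two $k$-row coordinate families $X_i=[\,PA_iQ\ \ PA_i\,]$ and $Y_i=(A_iQ)^T$, so one of them contains $\sqrt{m/2}$ separated points. It does not come from subsampling.
\end{enumerate}
This is also why $F$ lives in $\R^{k\times N}$ with $N=k+n$ or $N=d$, rather than $N=n$.
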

\begin{proof}
Choose a small numerical $\eta>0$, then a sufficiently small numerical
$\tau/a>0$, and finally a sufficiently large numerical constant in
\[
 k=\left\lceil C_0(1+H^2/a^2)\right\rceil.
\]
Split each matrix at singular-value threshold $\tau$ as $A_i=B_i+E_i$.
Then
\[
 \rank B_i\le H^2/\tau^2,
 \quad\norm{E_i}_{\op}\le\tau,
 \quad\norm{E_i}_{\HS}\le H,
 \quad\norm{B_i}_{\op}\le La.
\]
Let $P\in\R^{k\times d}$ and $Q\in\R^{n\times k}$ be independent,
with entries $N(0,1/k)$.

Call an index good if $P$ embeds the range of $B_i$, and $Q^T$ embeds its
row space, with the bounds \eqref{eq:gaussian-subspace}, and if
\begin{equation}\label{eq:tail-good}
 \max\{\norm{PE_i}_{\op},\norm{E_iQ}_{\op},
                   \norm{PE_iQ}_{\op}\}\le\eta a.
\end{equation}
For each individual index this event has probability at least $0.99$.
To see the tail assertion, \eqref{eq:gaussian-product-norm} gives
\[
 \E\norm{PE_i}_{\op},\ \E\norm{E_iQ}_{\op}
 \le\tau+H/\sqrt{k}.
\]
Conditioning on $Q$ and using
$\E\norm{E_iQ}_{\HS}\le\norm{E_i}_{\HS}$ also gives
\[
 \E\norm{PE_iQ}_{\op}\le\tau+2H/\sqrt{k}.
\]
Choose the constants so that Markov's inequality bounds the total tail
failure probability by $0.005$. Equation~\eqref{eq:gaussian-subspace}
controls the two remaining failures. All these choices are numerical.

Define the common linear map
\begin{equation}\label{eq:Phi}
 \Phi(A)=\begin{pmatrix}PAQ&PA\\AQ&0\end{pmatrix}.
\end{equation}
For a good index, the embedding upper bounds and
\eqref{eq:tail-good} imply
\begin{equation}\label{eq:Phi-radius}
 \norm{\Phi(A_i)}_{\op}\le8La+3\eta a\le CLa.
\end{equation}

For good indices $i,j$, choose a left inverse $L_i$ of $P$ on the range
of $B_i$, and a right inverse $R_j$ of $Q$ on the row space of $B_j$.
They have norms at most two and satisfy $L_iPB_i=B_i$ and $B_jQR_j=B_j$.
For $D=B_i-B_j$, the exact identity
\begin{equation}\label{eq:compression-identity}
 D=L_iPD+DQ R_j-L_iPDQ R_j
\end{equation}
gives
\[
 \norm D_{\op}\le2\norm{PD}_{\op}+2\norm{DQ}_{\op}
                                      +4\norm{PDQ}_{\op}.
\]
On the other hand $\norm D_{\op}\ge a-2\tau$. The tail bound for each
of $i,j$ shows, on choosing $\eta$ sufficiently small, that
\begin{equation}\label{eq:Phi-separation}
 \norm{\Phi(A_i)-\Phi(A_j)}_{\op}\ge c_0a.
\end{equation}
This is a simultaneous conclusion for every pair of good indices; it
does not require a union bound over pairs.

Let $T=\{A_i\}$. Gaussian conditioning gives
\[
 \E_P\W(PT)\le\W(T),\qquad
 \E_Q\W(TQ)\le\W(T),\qquad
 \E_{P,Q}\W(PTQ)\le\W(T).
\]
For example, conditional on $g\in\R^k$, the vector $P^Tg$ has law
$(\norm g_2/\sqrt{k})g'$ with $g'$ standard; use
$\E\norm g_2/\sqrt{k}\le1$. The three blocks in \eqref{eq:Phi} thus give
\begin{equation}\label{eq:Phi-width}
 \E_{P,Q}\W(\Phi(T))\le3\W(T).
\end{equation}
The expected number of bad indices is at most $0.01m$. By Markov's
inequality, there is a choice of $P,Q$ with at least $m/2$ good indices
and $\W(\Phi(T))\le12\W(T)$.

For this choice write, over good indices,
\[
 X_i=[\,PA_iQ\ \ PA_i\,],\qquad Y_i=(A_iQ)^T.
\]
These both have $k$ rows, and
\[
 \norm{\Phi(A_i)-\Phi(A_j)}_{\op}
 \le\norm{X_i-X_j}_{\op}+\norm{Y_i-Y_j}_{\op}.
\]
Take maximal $t$-separated sets in the two coordinate families, where
$t=c_0a/8$. Their $t$-balls cover the respective families. Two distinct
indices cannot be assigned to the same pair of balls, by
\eqref{eq:Phi-separation}. Therefore
\[
 m/2\le\Pack(\{X_i\},t)\Pack(\{Y_i\},t).
\]
Select a packing of cardinality at least $\sqrt{m/2}$ in one coordinate
family. Taking a block, and transposing when needed, contracts $\W$.
Equations \eqref{eq:Phi-radius} and \eqref{eq:Phi-width} prove all the
assertions.
\end{proof}

\section{Localization and completion of the endpoint proof}
\label{sec:localization}

The only previously established chaos minoration used as an input is
\begin{equation}\label{eq:known-HS}
 u\big(\log N(T,\norm{\cdot}_{\HS},u)\big)^{1/4}\le C S(T).
\end{equation}
See \cite[Proposition 15.1.15]{TalagrandBook} and \cite{TalagrandChaos}.

\begin{lemma}[Choice of a bounded scale]\label{lem:dyadic}
Suppose $T$ is an $a$-separated finite family with $p=\log|T|>0$.
There are $s\ge a$, $q>0$, and an $s$-separated subfamily $T_0\subset T$
such that
\begin{equation}\label{eq:dyadic-conclusion}
 s\sqrt q\ge a\sqrt p,\qquad
 \log|T_0|\ge3q/4,\qquad
 \operatorname{diam}_{\op}(T_0)\le4s.
\end{equation}
\end{lemma}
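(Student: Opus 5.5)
The plan is to work along the dyadic scales $s_k=2^ka$, $k\ge0$, and to compare packing numbers at consecutive scales. Set
\[
 \ell_k=\log\Pack(T,2^ka),\qquad k\ge0 .
\]
Since $T$ is $a$-separated, $\ell_0=p$. Since $T$ is finite, $\ell_K=0$ as soon as $2^Ka>\operatorname{diam}_{\op}(T)$. The numbers $\ell_k$ are nonincreasing. At scale $s_k$ I will put $q=p\,4^{-k}$, so that $s\sqrt q=a\sqrt p$ holds with equality. It then remains to find one index $k$ for which a local $s_k$-separated subfamily of diameter at most $4s_k$ has logarithmic cardinality at least $\tfrac34 p\,4^{-k}$.

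\emph{Local step.} Fix $k$ and put $s=s_k$. Let $X\subset T$ be an $s$-separated subset with $|X|=e^{\ell_k}$. Let $Y\subset T$ be a maximal $2s$-separated subset; then $|Y|\le e^{\ell_{k+1}}$. By maximality, the operator-norm balls $B(y,2s)$ with $y\in Y$ cover $T$, and hence cover $X$. By pigeonhole, some $y\in Y$ satisfies
\[
 |X\cap B(y,2s)|\ge|X|/|Y|\ge e^{\ell_k-\ell_{k+1}}.
\]
Take $T_0=X\cap B(y,2s)$. It is $s$-separated as a subset of $X$, and it has diameter at most $4s$. Moreover $\log|T_0|\ge\ell_k-\ell_{k+1}$.

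\emph{Choice of scale.} It suffices to find $k\ge0$ with
\[
 \ell_k-\ell_{k+1}\ge\tfrac34\,p\,4^{-k}.
\]
Suppose instead that $\ell_k-\ell_{k+1}<\tfrac34 p4^{-k}$ for every $k$. Choose $K$ with $\ell_K=0$ and telescope:
\[
 p=\ell_0-\ell_K=\sum_{k<K}(\ell_k-\ell_{k+1})<\tfrac34 p\sum_{k\ge0}4^{-k}=p .
\]
This is a contradiction. Note that $K\ge1$, because $p>0$ forces $|T|\ge2$ and hence $\operatorname{diam}_{\op}(T)\ge a$. So the sum is nonempty and the inequality is genuinely strict.

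Therefore some $k$ has the required increment. With $s=2^ka\ge a$, $q=p4^{-k}>0$, and $T_0$ from the local step, all three conditions in \eqref{eq:dyadic-conclusion} hold.

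The only point needing care is matching the weights. The gap $\ell_k-\ell_{k+1}$ must be compared against $q=p4^{-k}$, so that the geometric series $\sum_k 4^{-k}=4/3$ exactly cancels the factor $3/4$. This is what makes the constants $3/4$ and $4$ in the statement consistent, and I expect no further obstacle.
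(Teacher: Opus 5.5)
Your proof is correct. It differs from the paper's only in how the scale is chosen; the local step (cover an $s$-packing of size $e^{\ell_k}$ by the $2s$-balls around a maximal $2s$-separated subset, then pigeonhole) is exactly the paper's.

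\textbf{The paper's choice.} It takes $j$ maximizing $4^j\ell_j$ and sets $q=\ell_j=\log\Pack(T,2^ja)$. Maximality gives both $\ell_{j+1}\le q/4$ and $4^jq\ge\ell_0=p$ in one line. So $q$ is the true packing entropy at scale $s$, $T_0$ keeps three quarters of it, and $s\sqrt q\ge a\sqrt p$ may be strict.

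\textbf{Your choice.} You prescribe $q=p4^{-k}$, which makes $s\sqrt q=a\sqrt p$ an identity. You then find a good $k$ by telescoping $p=\sum_k(\ell_k-\ell_{k+1})$ against $\sum_k\tfrac34p4^{-k}=p$.

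The two criteria are compatible. The paper's maximizer satisfies $\ell_j-\ell_{j+1}\ge\tfrac34\ell_j\ge\tfrac34p4^{-j}$, so it is one of the indices your averaging argument guarantees.

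What you give up is the intrinsic meaning of $q$, which under your choice can be much smaller than $\ell_k$. This matters in one place. In the proof of Theorem~\ref{thm:main}, the paper justifies \eqref{eq:x-lower} by $q>0$, which under its choice means $\Pack(T,s)\ge2$. Under your choice the same fact follows from the lemma's conclusion, since $\log|T_0|\ge3q/4>0$ forces $|T_0|\ge2$ with $T_0$ $s$-separated. The rest of that proof uses only the conclusions in \eqref{eq:dyadic-conclusion} and $T_0\subset T$, so your version can be substituted without further change.
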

\begin{proof}
Set $q_j=\log\Pack(T,2^ja)$ for $j\ge0$ and choose $j$ maximizing
$4^jq_j$. The maximum exists since $q_j=0$ for all sufficiently large
$j$, and it is positive since $q_0=p$. Set $s=2^ja$, $q=q_j$.
Then $q_{j+1}\le q/4$ and $s^2q\ge a^2p$.

A maximal $2s$-separated subset covers $T$ by at most $e^{q/4}$ operator
balls of radius $2s$. An $s$-packing with $e^q$ points has at least
$e^{3q/4}$ points in one such ball. Their diameter is at most $4s$.
\end{proof}

\begin{proof}[Proof of Theorem~\ref{thm:main}]
The assertion is trivial for $m=1$. Put $M=S(T)$ and apply
Lemma~\ref{lem:dyadic}, obtaining $s,q,T_0$. Since $q>0$, the original
family contains two points separated by at least $s$, so
\begin{equation}\label{eq:x-lower}
 M\ge s/\pi.
\end{equation}
If $q\le16$, \eqref{eq:dyadic-conclusion} and \eqref{eq:x-lower} already
prove the desired result. Hence assume $q>16$.

Choose $u=C_0M/q^{1/4}$, with $C_0$ large enough that
\eqref{eq:known-HS}, applied to $T_0$, supplies a Hilbert--Schmidt cover
with at most $e^{q/8}$ balls of radius $u$. One ball contains at least
$e^{5q/8}$ points. Translate those points by one of them and call the
resulting family $T_1$. Then
\begin{align}
 \log|T_1|&\ge5q/8,
 &\norm{A-B}_{\op}&\ge s\quad(A\ne B),\notag\\
 \sup_{A\in T_1}\norm A_{\op}&\le4s,
 &\sup_{A\in T_1}\norm A_{\HS}&\le CM/q^{1/4}.
 \label{eq:localized-family}
\end{align}
Translation preserves $S$, and $0\in T_1$, so $\W(T_1)\le2M$.

Apply Lemma~\ref{lem:compression} to $T_1$. We obtain a family
$F\subset\R^{r\times N}$ with
\begin{align}
 \log|F|&\ge q/4,\label{eq:final-family-card}\\
 \norm{B-B'}_{\op}&\ge cs\quad(B\ne B'),
 &\sup_{B\in F}\norm B_{\op}&\le Cs,\notag\\
 \W(F)&\le CM,
 &r&\le C\left(1+\frac{M^2}{s^2\sqrt q}\right).
 \label{eq:final-family-bounds}
\end{align}
The constants in the radius-to-separation ratio are numerical, so
Corollary~\ref{cor:bounded-row} applies with a fixed numerical $L$.
Write $x=M/s$ and $y=\sqrt q$. Equations
\eqref{eq:final-family-card}--\eqref{eq:final-family-bounds} give
\begin{equation}\label{eq:closing-quadratic}
 y\le C(1+r+x)\le C(1+x+x^2/y).
\end{equation}
Multiplication by $y$ and the quadratic formula imply
$y\le C'(1+x)$. Since $x\ge1/\pi$ by \eqref{eq:x-lower}, it follows
that $s\sqrt q\le CM$. Finally,
\[
 a\sqrt{\log|T|}\le s\sqrt q\le CS(T).
\]
This proves \eqref{eq:main-packing}. A maximal
$\varepsilon$-separated subset is an $\varepsilon$-cover, so applying
the packing assertion to that subset proves \eqref{eq:main-covering}.
\end{proof}

\begin{remark}[Why the order of reductions matters]
The dyadic selection need not retain a fixed fraction of the original
entropy. It retains the quantity $s\sqrt q$, which is the numerator of
the desired Sudakov ratio. The Hilbert--Schmidt radius and the common row
dimension are then recomputed using this new $q$. This gives simultaneously
a fixed radius-to-separation ratio and
$r\lesssim1+(M/s)^2/\sqrt q$. Keeping a rank bound based on the old entropy
would not justify \eqref{eq:closing-quadratic}.
\end{remark}

\begin{corollary}[Equal-rank projections]\label{cor:projections}
Let $P_1,\ldots,P_m$ be orthogonal projections of the same rank, satisfying
$\norm{P_i-P_j}_{\op}\ge a$ for $i\ne j$. For a standard Gaussian vector
$z$,
\begin{equation}\label{eq:projection-oscillation}
 \E\left[\max_i z^TP_i z-\min_i z^TP_i z\right]
 \ge c a\sqrt{\log m}.
\end{equation}
\end{corollary}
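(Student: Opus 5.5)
We reduce the quadratic oscillation to the decoupled bilinear width of Theorem~\ref{thm:main}. The plan has three steps: write the oscillation as a symmetric chaos, decouple it into a bilinear form in independent Gaussians, and then apply Theorem~\ref{thm:main} to the family $\{P_i\}$ itself.

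\emph{Step 1: remove the diagonal.} Since all projections have the same rank $k$, we have $\E z^TP_iz=\tr P_i=k$ for every $i$. Hence the oscillation equals $\E[\max_i X_i-\min_i X_i]$ with $X_i=z^TP_iz-k$. The diagonal part $\sum_\ell (P_i)_{\ell\ell}(z_\ell^2-1)$ need not be treated separately. Instead we rotate-decouple: take $z'$ an independent copy of $z$ and put $g=(z+z')/\sqrt2$, $h=(z-z')/\sqrt2$. Then $g,h$ are independent standard Gaussian vectors, and for symmetric $P$,
\[
 g^TPh=\tfrac12\big(z^TPz-z'^TPz'\big).
\]

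\emph{Step 2: compare with the bilinear width.} The identity of Step~1 gives
\[
 \sup_i g^TP_ih\le\tfrac12\big(\max_i z^TP_iz-\min_i z'^TP_iz'\big).
\]
Taking expectations and using that $z,z'$ have the same law yields
\[
 S(\{P_i\})\le\tfrac12\E\max_i z^TP_iz-\tfrac12\E\min_i z^TP_iz
 =\tfrac12\E\Big[\max_i z^TP_iz-\min_i z^TP_iz\Big].
\]
This bound does not even use the equal-rank hypothesis; that hypothesis only explains why no centering term survives. The $P_i$ are $a$-separated in operator norm, so Theorem~\ref{thm:main} gives $S(\{P_i\})\ge ca\sqrt{\log m}$. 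Combining the two displays proves \eqref{eq:projection-oscillation} with constant $2c$.

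\emph{Expected obstacle.} The only delicate point is checking the decoupling inequality itself. One must verify that $\max_i(\alpha_i-\beta_i)\le\max\alpha-\min\beta$ holds pointwise, which is immediate. One must also verify that $\E\min_i z'^TP_iz'=\E\min_i z^TP_iz$, which holds because $z'\overset{d}{=}z$. The argument then needs no separate treatment of diagonal terms, and equal rank is used only to interpret the statement as a centered oscillation. If one wanted a statement about $\E\sup_i|z^TP_iz-k|$ instead, the diagonal would matter. For the max-minus-min form above, the symmetrization is exact.
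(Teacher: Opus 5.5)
Your proposal is correct and is essentially the paper's argument. The rotation of two independent Gaussian vectors by $45^\circ$ turns $g^TP_ih$ into $\tfrac12(z^TP_iz-z'^TP_iz')$, which gives $2S(\{P_i\})\le\E[\max_i z^TP_iz-\min_i z^TP_iz]$, and Theorem~\ref{thm:main} then finishes; as you note, the equal-rank hypothesis and your Step~1 centering play no role, and the paper's proof does not use them either.
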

\begin{proof}
Let $g,h$ be independent standard Gaussian vectors and put
$u=(g+h)/\sqrt2$, $v=(g-h)/\sqrt2$. These are independent standard
Gaussian vectors, and symmetry of the projections gives
\[
 g^TP_i h=\tfrac12(u^TP_i u-v^TP_i v).
\]
Taking maxima and expectations shows that twice the bilinear supremum is
at most the oscillation in \eqref{eq:projection-oscillation}.
Apply Theorem~\ref{thm:main}.
\end{proof}

\begin{remark}[Hilbert-space matrices]
The theorem also applies to a finite family of Hilbert--Schmidt operators
between separable real Hilbert spaces. Compress onto increasing finite
dimensional subspaces. Hilbert--Schmidt convergence implies operator-norm
convergence and $L^2$ convergence of each associated chaos. For a finite
family the expected supremum therefore converges. Passing to the limit
proves the same inequality. Arbitrary index sets are treated through their
finite separated subfamilies whenever the supremum is well defined.
\end{remark}

\end{document}